\documentclass[12pt,letterpaper,english]{amsart}
\usepackage{lmodern}
\usepackage{helvet}
\usepackage{hyperref}

\usepackage[T1]{fontenc}
\usepackage[latin9]{inputenc}
\usepackage{mathrsfs}
\usepackage{amsthm}
\usepackage{amstext}
\usepackage{amssymb}
\usepackage{tikz}
\usepackage{esint}
\usepackage{graphicx}
\usepackage{cancel}
\usepackage{enumitem}
\DeclareFontFamily{OT1}{pzc}{}
\DeclareFontShape{OT1}{pzc}{m}{it}{<-> s * [1.10] pzcmi7t}{}
\DeclareMathAlphabet{\mathpzc}{OT1}{pzc}{m}{it}
\usepackage[bbgreekl]{mathbbol}
\DeclareSymbolFontAlphabet{\mathbb}{AMSb}
\DeclareSymbolFontAlphabet{\mathbbl}{bbold}

\makeatletter

\pdfpageheight\paperheight
\pdfpagewidth\paperwidth

\newcommand{\noun}[1]{\textsc{#1}}

\numberwithin{equation}{section}
\numberwithin{figure}{section}
\usepackage{enumitem}		% customizable list environments
\theoremstyle{plain}
\newtheorem{thm}[equation]{\protect\theoremname}
\theoremstyle{remark}
\newtheorem{rem}[equation]{\protect\remarkname}
\theoremstyle{definition}

\theoremstyle{definition}
\newtheorem{example}[equation]{\protect\examplename}
\theoremstyle{plain}

\theoremstyle{plain}
\newtheorem{prop}[equation]{Proposition}
\theoremstyle{plain}

\usepackage{amsfonts}
\usepackage{amssymb}
\usepackage[all]{xy}
\usepackage{array}
\usepackage{lmodern}
\usepackage[T1]{fontenc}
\usepackage{bm}

\def\QQ{\mathbb{Q}}
\def\RR{\mathbb{R}}
\def\CC{\mathbb{C}}

\def\ZZ{\mathbb{Z}}
\def\PP{\mathbb{P}}
\def\DD{\mathbb{D}}
\def\GG{\mathbb{G}}
\def\EE{\mathbb{E}}

\def\W{\mathcal{W}}
\def\VV{\mathbb{V}}
\def\F{\mathcal{F}}
\def\Z{\mathcal{Z}}
\def\V{\mathcal{V}}

\def\X{\mathcal{X}}
\def\XB{\overline{\X}}

\def\H{\mathcal{H}}
\def\HH{\mathbb{H}}

\def\D{\mathcal{D}}
\def\vf{\varphi}

\def\Res{\text{Res}}

\def\IH{\mathrm{IH}}

\def\gr{\mathrm{Gr}}

\def\E{\mathcal{E}}

\def\QB{\overline{\QQ}}
\def\CH{\mathrm{CH}}

\def\NF{\mathrm{NF}}
\def\ANF{\mathrm{ANF}}

\def\Hg{\mathrm{Hg}}
\def\ay{\mathbf{i}}

\def\fb{\mathfrak{b}}
\def\fa{\mathfrak{a}}
\def\BH{\mathbf{H}}
\def\U{\mathcal{U}}
\def\UAN{\U_{\CC}^{\text{an}}}

\def\BQ{\mathbf{Q}}
\def\BE{\mathbf{E}}
\def\b{\bullet}
\def\cO{\mathcal{O}}
\def\uh{\underline{h}}
\def\HMS{\mathrm{Hom}_{\mathrm{MHS}}}
\def\EMS{\mathrm{Ext}^1_{\mathrm{MHS}}}
\def\AJ{\mathrm{AJ}}
\def\Z{\mathcal{Z}}
\def\fV{\mathfrak{V}}
\def\J{\mathcal{J}}
\def\pb{\overline{\pi}}
\def\kmath{\mathsf{k}}
\def\UR{\mathfrak{Ur}}
\def\SC{\mathscr{C}}
\def\SF{\mathscr{F}}
\def\SJ{\mathscr{J}}
\def\BV{\mathbf{V}}
\def\VE{\varepsilon}
\def\SE{\mathsf{e}}
\def\ff{\mathfrak{f}}
\def\FV{\mathfrak{V}}
\def\fz{\mathfrak{z}}
\def\RI{\mathrm{I}}
\def\RII{\mathrm{II}}
\def\RIII{\mathrm{III}}
\def\RIV{\mathrm{IV}}
\def\BP{\mathbf{P}}
\def\SH{\mathsf{H}}

\theoremstyle{definition}
\newtheorem*{thx}{Acknowledgments}

\makeatother

\usepackage{babel}
  \providecommand{\corollaryname}{Corollary}
  \providecommand{\definitionname}{Definition}
  \providecommand{\remarkname}{Remark}
\providecommand{\theoremname}{Theorem}
 \providecommand{\examplename}{Example}

\begin{document}

\title[Hypergeometric normal functions]{Three vignettes on hypergeometric normal functions}

%\author{}

\author[M.~Kerr]{Matt Kerr}

\subjclass[2000]{14C30, 19E15, 33C20}
\begin{abstract}
We use Hodge-theoretic methods to (i) explain number-theoretic identities of a type recently considered by Guillera and Zudilin, (ii) describe the Frobenius dual of Abel-Jacobi period functions, and (iii) offer a new proof of Golyshev's conjecture on algebraic hypergeometrics, aided by an argument in the spirit of Lefschetz's (1,1) theorem.
\end{abstract}
\maketitle

\section{Introduction}\label{S1}

In his final published mathematical work, Poincar\'e studied holomorphic sections of a Jacobian bundle associated to a pencil of algebraic curves \cite{Po}. When these sections behave ``normally'' (with ``admissible'' logarithmic singularities) along the discriminant locus, he showed that they arise from an algebraic 1-cycle on the total space. Lefschetz's subsequent discovery, that fibering out a rational (1,1)-class on a smooth projective surface produces such a \emph{normal function} \cite{Le}, thus verified an instance of the Hodge Conjecture in the days before the Hodge decomposition was known.

More generally, for a smooth projective morphism $\pi\colon \mathcal{X}\to\U$ of smooth quasi-projective varieties, there is a correspondence between Hodge classes on $\mathcal{X}_{\CC}^{\text{an}}$ and admissible normal functions over $\U_{\CC}^{\text{an}}$ valued in generalized Jacobian bundles associated to $\pi$.  The algebraic $K$-groups of $\mathcal{X}$ provide a natural source of such Hodge classes, and according to Beilinson's generalization of the Hodge Conjecture, produce all of them when $\mathcal{X}$ is defined over $\QB$. We briefly review these notions in \S\ref{S2} and go into a bit more depth in \S\ref{S5}.

Among the more explicitly computable examples of normal functions are those taking values in Jacobians of hypergeometric variations of Hodge structure.  As recalled in \S\ref{S3}, these are variations over $\PP^1\setminus\{0,1,\infty\}$ whose Picard-Fuchs equations take the form
\begin{equation}\label{eq1.1}
\textstyle\mspace{50mu}L_{\fa,\fb}=\prod_j(D+\fb_j-1)-z\prod_j(D+\fa_j)\mspace{30mu}(D:=z\tfrac{d}{dz}),
\end{equation}
and which thus have (1-variable) hypergeometric functions as periods.  Ramanujan's identities and functional equations involving hypergeometrics count with his most significant discoveries.

The heart of this paper (in \S\S\ref{S4},\ref{S6},\ref{S7}) is a collection of three mathematical vignettes (or sketches) illustrating connections between ``Lefschetz'' and ``Ramanujan''.  That is, we want to examine how the basic relationship observed by Lefschetz, between normal functions in a family and Hodge classes on the total space, might shed new light on hypergeometric identities and function theory.  Each of the vignettes was motivated by a simple question.  For the first, recall the Pochhammer symbol $[a]_n:=a(a+1)\cdots(a+n-1)$.\vspace{2mm}

\textbf{Q1}: Where do ``upside-down Ramanujan identities'' like
\begin{equation}\label{eq1.2}
\sum_{n\geq 1}\frac{10n^2-6n+1}{n^5}\cdot \frac{n!^5(-\frac{1}{4})^n}{[\frac{1}{2}]_n^5}=-28\zeta(3)
\end{equation}
(due to Guillera \cite{Gu}) and
\begin{equation}\label{eq1.3}
\sum_{n\geq 1}\tfrac{5532n^4-5600n^3+2275n^2-425n+30}{n^9}\cdot\frac{n!^9(-\frac{1024}{3125})^n}{[\frac{1}{2}]_n^5[\frac{1}{5}]_n[\frac{2}{5}]_n[\frac{3}{5}]_n[\frac{4}{5}]_n}=-380928\zeta(5)
\end{equation}
(due to K-C Au \cite{Au}) \emph{really} come from?\vspace{2mm}

\textbf{Q2}: ``Most'' hypergeometric functions are transcendental.  But once you have one like $$f=\sum_{n\geq 0}\frac{(30n)!n!}{(15n)!(10n)!(6n)!}z^n$$ which is algebraic over $\QQ(z)$, it turns out that $$\exp\left(\int f\frac{dz}{z}\right)$$ is algebraic too!  Why?\vspace{2mm}

\textbf{Q3}: For variations of Hodge structure (VHS) of Calabi-Yau type (top Hodge number 1), The Poincar\'e pairing of a normal function $\nu$ with a section $\mu$ of the Hodge line bundle yields a (multivalued) \emph{quasi-period} function $\fV$ solving an \emph{inhomogeneous} Picard-Fuchs equation $L\fV=f$.  Dually (see \S\ref{S7}), there exists a choice of (multivalued) lift $\tilde{\nu}$ to the VHS such that\footnote{Here $D$ means $\nabla_D$, where $\nabla$ is the Gauss-Manin connection.} $D\tilde{\nu}$ is a constant multiple of $f\mu$.  In the case of a family of elliptic curves, $\fV$ is the usual Abel-Jacobi integral attached to a family of divisors.  Does the special lift $\tilde{\nu}$ also have a geometric interpretation, and can we compute it for hypergeometric families?

\begin{thx}
MK was supported by NSF Grant DMS-2502708 and the Simons Foundation, and is grateful to V.~Golyshev for numerous discussions out of which these ideas crystallized. He thanks D.~Akman, C.~Doran, M.~Elmi, I.~Gaiur, A.~Thompson, and W.~Zudilin for helpful conversations, and J.~Voight for a wonderful talk at Washington University based on \cite{DPVZ} which stimulated the first vignette.
\end{thx}

\section{Normal functions}\label{S2}

We fix some conventions for the duration.  Given a smooth projective morphism $\pi\colon \X\to \U$ (and $\ell\in \ZZ_{\geq 0}$), the local system $\HH^{\ell}_{\pi}:=R^{\ell}\pi_*\QQ$ and Hodge flag $\F^{\b}$ on $\H^{\ell}_{\pi}:=\HH^{\ell}_{\pi}\otimes \cO_{\U}$ underlie a VHS $\BH^{\ell}_{\pi}:=(\HH^{\ell}_{\pi},\F^{\b}\H^{\ell}_{\pi},\nabla)$ and period map $\mathcal{P}\colon \U_{\CC}^{\text{an}}\to \Gamma\backslash \DD=\Gamma\backslash G(\RR)/M$.  Here $G$ and $\DD$ are the Mumford-Tate group resp.~domain \cite{GGK}, $M\leq G(\RR)$ is compact, and $\Gamma$ is (or contains) the monodromy group attached to $\HH^{\ell}_{\pi}$.  More generally, $\BH\subseteq \BH^{\ell}_{\pi}$ will denote a sub-VHS; we are only interested in variations with are ``motivic'' in this sense.  Write $\uh=(h^{0,\ell},h^{1,\ell-1},\ldots,h^{\ell,0})$ for the Hodge numbers $h^{k,\ell-k}{:=}\mathrm{rk}(\gr_{\F}^k\H)$ of $\BH$.  Denote by $\BQ(0)$ the trivial VHS of weight 0 and rank 1, and by $\BH(p)$ the Tate twist of weight $\ell-2p$, which multiplies coefficients of the local system by $(2\pi\ay)^p$.

Given a mixed Hodge structure (MHS) $V$, let 
\begin{equation}\label{eq2.1}
\Hg(V):=\HMS(\QQ(0),V)\cong V_{\QQ}\cap F^0V_{\CC}
\end{equation}
be the space of Hodge classes.  For $V$ pure of negative weight, we write
\begin{equation}\label{eq2.2}
J(V):=\EMS(\QQ(0),V)\cong {V_{\CC}}/({F_0V_{\CC}+V_{\QQ}})
\end{equation}
for the (rational) Jacobian.  The isomorphism in \eqref{eq2.2} is made explicit by taking an extension
\begin{equation}\label{eq2.3}
0\to V\to E\to \QQ(0)\to 0	
\end{equation}
and lifting $1\in \QQ(0)$ to $e_{\QQ}\in E_{\QQ}$ and $e_F\in F^0E_{\CC}$, so that the difference $e_{\QQ}-e_F$ lies in $V_{\CC}$.  Quotienting out the ambiguities of the lifts gives an element of RHS\eqref{eq2.2}.

An admissible normal function $\nu\in \mathrm{ANF}_{\U_{\CC}^{\text{an}}}(\BH(p))$ is an extension
\begin{equation}\label{eq2.4}
0\to \BH\to \BE_{\nu}\to \BQ(-p)\to 0	
\end{equation}
in the category of admissible variations of MHS on $\U_{\CC}^{\text{an}}$.  Tate-twisting \eqref{eq2.4} by $p$ and restricting to the fiber over $u$ gives an extension of the form \eqref{eq2.3}, hence an element in $J(\BH(p)|_u)$.  This leads to a holomorphic, horizontal section of the Jacobian bundle $\J(\BH(p)):=\H/(F^p\H+\HH(p))$ over $\UAN$, with values denoted $\nu(u)$.  (Later, $\tilde{\nu}$ will mean a multivalued lift of $\nu$ to $\H$.) Addition of sections puts a group structure on $\ANF_{\U_{\CC}^{\text{an}}}(\BH(p))$ agreeing with that defined by the Baer sum.

An algebraic (higher Chow) cycle $\Z\in \CH^p(\X,m)$ homologous to $0$ on fibers $X_u\overset{\imath_u}{\hookrightarrow} \X$ of $\pi$ gives rise to an admissible normal function $\nu_{\Z}\in \ANF_{\U_{\CC}^{\text{an}}}(\BH(p))$ with $\ell=2p-m-1$.  We may view $\Z$ as a relative cycle on $\X\times (\PP^1\setminus\{1\},\{0,\infty\})^m$ of codimension $p$. The value $\nu_{\Z}(u)$ is defined by applying the Abel-Jacobi map\footnote{Here and in \eqref{eq2.7} we implicitly compose with the projection $\BH^{\ell}_{\pi}\twoheadrightarrow\BH$.} (cf.~\cite{KLM})
\begin{equation}\label{eq2.5}
\AJ_{X_s}^{p,m}\colon \CH^p_{\text{hom}}(X_s,m)\to J(\BH(p)|_s)
\end{equation}
to $Z_u:=\imath^*_u\Z$.  The higher Chow groups enjoy isomorphisms $$\CH^p(\X,m)\otimes{\QQ}\cong H^{2p-m}_{\text{Mot}}(\X,\QQ(p))\cong\gr^p_{\gamma}K^{\text{alg}}_m(\X)\otimes{\QQ}$$ with motivic cohomology and $K$-theory (as $\X$ is smooth).  We use ``$K_m$-cycles'' to informally convey the cycle type (with $K_0$ ``classical'' and $K_{>0}$ ``higher'') and weight gap of $m+1$ in \eqref{eq2.4}.

Henceforth we restrict to the case where $\U\subset\PP^1$ (with coordinate $z$) is the complement of a finite set $\Sigma$, $\dim(\X)=\ell+1$, $\BH$ is irreducible and nonconstant of level $\ell$,\footnote{Since $\BH\subseteq\BH^{\ell}_{\pi}$, this simply means that the leading Hodge number $h^{\ell,0}\neq 0$.} and $\X,\U,\pi$ are definable over $\QQ$.  We shall also drop the use of $(\,)^{\text{an}}_{\CC}$ to simplify formulas.  The \emph{Lefschetz correspondence} alluded to in \S\ref{S1} takes the form of an isomorphism
\begin{equation}\label{eq2.6}
\ANF_{\U}(\BH(p))\overset{\cong}{\underset{[\,\cdot\,]}	{\longrightarrow}} \Hg(H^1(\U,\BH(p)))\subseteq \Hg(H^{\ell+1}(\X,\QQ(p)))
\end{equation}
induced by the topological invariant.  The \emph{Beilinson-Hodge Conjecture} (BHC) predicts that the fundamental class map (with $m=2p-\ell-1$)
\begin{equation}\label{eq2.7}
\mathrm{cl}^{p,m}_{\pi}\colon \CH^p(\X,m)\to \Hg(H^1(\U,\BH(p)))
\end{equation}
is surjective.\footnote{To be clear, we consider the cycle groups for the \emph{complex} variety, though by a spread argument all Hodge classes would already be definable over $\QB$.} In general $\mathrm{cl}^{p,m}_{\pi}(\Z)=[\nu_{\Z}]$, and so in our situation all normal functions are expected to come from cycles defined over $\QB$.

Let $\Sigma_0\subseteq\Sigma$ and $\U\overset{\jmath}{\hookrightarrow}\U_0:=\PP^1\setminus\Sigma_0$ be the inclusion.  Write $\IH^1(\U_0,\BH)$ for the Zucker/Saito MHS with underlying $\QQ$-vector space $H^1(\U_0,\jmath_*\HH)$; in particular, $\IH^1(\PP^1,\BH)$ is pure (of weight $\ell+1$) and fits in an exact sequence\vspace{-2mm}
\begin{equation}\label{eq2.8}
0\to \overset{\text{\textcircled{A}}}{\IH^1(\PP^1,\BH)}\to H^1(\U,\BH)\to \overset{\text{\textcircled{B}}}{\oplus_{\sigma\in \Sigma}(H_{\lim,\sigma})_{T_{\sigma}}(-1)}\to 0
\end{equation}
where $H_{\lim,\sigma}$ is the LMHS of $\BH$ at $z=\sigma$ and $(\,)_{T_{\sigma}}$ denotes the coinvariants with respect to the monodromy operator at $\sigma$.  Hodge classes in \textcircled{A} correspond to families of $K_0$-cycles ($r=0$ in \eqref{eq2.7}), and those in \textcircled{B} to families of $K_{>0}$-cycles.  Tate-twisting \eqref{eq2.8} by $(p)$, a given normal function $\nu$ is \emph{nonsingular} at $\sigma$ if $[\nu]$ maps to zero in the corresponding summand of \textcircled{B}.  Normal functions singular \emph{only} at $\Sigma_0$ define a subgroup $\ANF_{\U_0}(\BH(p))\cong \Hg(\IH^1(\U_0,\BH(p)))$.  Note that \textcircled{A} is a direct summand of $H^{\ell+1}(\overline{\X})$ for any smooth completion $\XB\supset\X$.

To relate normal functions to differential equations, suppose that $\HH$ has maximal unipotent monodromy (MUM) at $z=0$, $h^{\ell,0}=1$, and let $\mu\in \Gamma(\PP^1,\F_e^{\ell})$ generate the canonically extended Hodge bundle $\F_e^{\ell}\cong \mathcal{O}_{\PP^1}(h)$ on $\PP^1\setminus\{\infty\}$.  We can normalize $\mu$ to have a $\QQ$-Betti period limiting to $1$ at $z=0$, and so that $(2\pi\ay)^{\ell}\mu$ is defined over $\QQ$.  Let $L\in \QQ\langle D,z\rangle$ denote the Picard-Fuchs operator (of degree $d$) killing $\mu$.  Then to $\nu\in \ANF_{\GG_m}(\BH(p))$ we can associate the \emph{quasi-period}
\begin{equation}\label{eq2.9}
\mathfrak{V}(t):=\langle \tilde{\nu}(z),\mu(z)\rangle,
\end{equation}
which is a well-defined holomorphic function on the universal cover $\widetilde{\UAN}$.  We summarize parts of \cite[Thm.~5.1]{GKS} and \cite[Thm.~5.5]{GK} in
\begin{prop}\label{pr2.10}
The \emph{inhomogeneity} $L\mathfrak{V}=:g$ is a polynomial in $z$ with $\deg(g)\leq d-h$.  Further, if $\nu$ is nonsingular at $0$ then we have $z\mid g$; and if $\nu=\nu_{\Z}$ is motivic (with $\Z$ defined/$K\subset\QB$) then $g\in K[z]$.
\end{prop}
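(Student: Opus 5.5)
The approach is to compute $L\fV$ using Griffiths transversality together with the horizontality of the normal function, and then to control the result at the boundary points. Since $\nabla_D\nu=0$ in $\J$, any lift satisfies $\nabla_D\tilde\nu\in F^p\H$; changing lifts by $\HH(p)$ is invisible after applying $L$, because $L$ kills periods. Write $L=\sum_{k\le d} a_k(z)D^k$. Using $D\langle\tilde\nu,\mu\rangle=\langle\nabla_D\tilde\nu,\mu\rangle+\langle\tilde\nu,\nabla_D\mu\rangle$ and expanding $L\langle\tilde\nu,\mu\rangle$ by Leibniz, we get
\[
L\fV=\langle\tilde\nu,L\mu\rangle+\sum(\text{terms }\langle\nabla_D^{i}\tilde\nu,\nabla_D^{j}\mu\rangle,\ i\ge1).
\]
The first term vanishes. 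In each remaining term, $\nabla^{i-1}_D(\nabla_D\tilde\nu)\in F^{p-i+1}$ pairs with $\nabla^j_D\mu\in F^{\ell-j}$, and Griffiths transversality combined with the degree bound $i+j\le d=\ell+1$ kills most of these terms. What survives is a single-valued expression of the form $g=\sum_k a_k\cdot(\text{pairings of }\nabla_D\tilde\nu\text{-derivatives with }\mu\text{-derivatives})$. This is single-valued because $\nabla_D\tilde\nu$ is independent of the lift up to $\nabla_D$ of flat sections, which is $0$. So $g$ is holomorphic on $\U$.

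The next step is to show $g$ is a polynomial. At each finite singular point $\sigma$, admissibility gives that $\tilde\nu$ has at worst logarithmic growth, with nilpotent orbit asymptotics, while $\mu$ is a section of the canonical extension. Hence $g$ is single-valued with moderate growth, so it is meromorphic at $\sigma$. In fact it is holomorphic there: the limiting extension $\BE_\nu$ has its $F^p$-part extending into the canonical extension (this is the content of admissibility), so the pairing of $\nabla_D\tilde\nu$, a section of $F^{p}\mathcal{E}_e\otimes\Omega(\log)$, with $\mu$ is regular. So $g\in\CC[z]$. For the degree bound at $\infty$, $\mu$ generates $\F^\ell_e\cong\cO(h)$ on $\PP^1\setminus\{\infty\}$, so near $\infty$ it equals $z^{h}$ times a local generator. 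Normalizing $L$ so that the leading coefficient $a_d$ has degree at most $d$ (it is $\prod(D+\fb_j-1)-z\prod(D+\fa_j)$-type, linear in $z$ after the weights are accounted for) and repeating the regularity argument at $w=1/z$, we get $\deg g\le d-h$. I expect this pole-order bookkeeping at $\infty$ to be the main obstacle: one must match the $z$-degree of the leading coefficient against the twist $\cO(h)$. I would try first to compute in the frame $w^{h}\mu$ and check the order of vanishing of the coefficients of $L$ written in $D_w=-D$.

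Finally, the two refinements. If $\nu$ is nonsingular at $0$, then its class in the summand \textcircled{B} at $\sigma=0$ vanishes, so the lift can be chosen single-valued and holomorphic at $0$ with $\tilde\nu(0)$ in the limit. Then $\fV$ is holomorphic at $z=0$. Since $L$ at $z=0$ reduces to $D^d$ (MUM), which kills constants, we get $g(0)=L\fV|_{z=0}=0$, i.e. $z\mid g$. For motivic $\nu=\nu_\Z$ with $\Z$ over $K$, the same Leibniz expression exhibits $g$ as an algebraic de Rham pairing. Indeed, $\nabla_D\tilde\nu_\Z$ is computed by the class of $\Z$ in algebraic de Rham cohomology, relative to the fibration, which is defined over $K$, while $(2\pi\ay)^\ell\mu$ and $L$ are defined over $\QQ$. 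The Tate twists cancel against the normalization of $\mu$, giving $g\in K[z]$. Alternatively, $\mathrm{Gal}(\CC/K)$-equivariance of the coefficients of $g$ gives the same conclusion.
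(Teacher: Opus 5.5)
Your proposal has a genuine gap in the step that carries the content of the statement: polynomiality of $g$.

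\textbf{Finite singular points.} At $\sigma\in\Sigma\cap\mathbb{G}_m$ you argue from admissibility alone that $g$ is regular, because it is ``the pairing of $\nabla_D\tilde\nu$ with $\mu$''. But $g$ is not that pairing. The Leibniz expansion involves terms $\langle\nabla_D^{i}\tilde\nu,\nabla_D^{j}\mu\rangle$ with $i,j$ up to the order of $L$. At $\sigma\neq 0,\infty$ the field $D=z\frac{d}{dz}$ is not logarithmic, so $\nabla_D$ does not preserve the canonical extensions and these terms can have poles. Your own ``$\otimes\Omega(\log)$'' already allows $\nabla_D\tilde\nu$ a pole at $\sigma$.

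More seriously, the conclusion is false for a merely admissible $\nu$, and you never use the actual hypothesis $\nu\in\mathrm{ANF}_{\mathbb{G}_m}(\mathbf{H}(p))$, i.e.\ that $\nu$ is nonsingular at every $\sigma\in\Sigma\cap\mathbb{G}_m$. For instance, take an $I_1$ fibre of an elliptic family. There $\mathrm{sing}_\sigma(\nu)\neq0$ means $(T_\sigma-1)^2\tilde\nu\neq0$. This forces a term $\frac{c}{2}(\log t)^2\pi_\delta$ in $\mathfrak{V}$, with $c\neq0$, $\pi_\delta$ the vanishing period and $\pi_\delta(\sigma)\neq0$. Since $L=t^{-1}(\kappa\theta_t^2+t\,M)$ with $\kappa\neq0$, this contributes $c\kappa\,\pi_\delta(\sigma)\,t^{-1}$ to $g$. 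The terms of lower $\log$-degree, which have holomorphic coefficients, cannot cancel it.

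The repair is the argument you already give at $z=0$. Nonsingularity at $\sigma$ lets you replace $e_{\mathbb{Q}}$ by a $T_\sigma$-invariant lift. Admissibility lets you take $e_F$ to be a holomorphic section of $F^0\mathcal{E}^e$. Then $\mathfrak{V}$ is holomorphic at $\sigma$ up to periods. Since $L$ has polynomial coefficients and $D$ is a holomorphic vector field near $\sigma$, $g=L\mathfrak{V}$ is holomorphic there. Your Leibniz/canonical-extension argument is valid only at $0$ and $\infty$, where $D$ is logarithmic; these are exactly the points where $\nu$ is allowed to be singular.

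\textbf{Other points to correct.}

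Horizontality gives only $\nabla_D\tilde\nu\in F^{p-1}\mathcal{H}$, not $F^p\mathcal{H}$. This section \emph{does} depend on the lift: adding $\eta\in F^p\mathcal{H}$ changes it by $\nabla_D\eta$, as noted at the start of \S\ref{S7}. Single-valuedness of $g$ holds for a different reason: $\langle\eta,\mu\rangle=0$ by type, and $L$ kills periods.

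$d$ is the degree of $L$ in $z$, not its order. For example, $d=1$ for $L_{\mathfrak{a},\mathfrak{b}}$ in \S\ref{S4} and $d=2$ in Example~\ref{ex7.5}. The order is the rank, which need not equal $\ell+1$. Read this way, the bound at $\infty$ is no obstacle. With $w=1/z$, write $L=w^{-d}\hat L$ with $\hat L\in\mathbb{Q}[w]\langle D_w\rangle$, and $\mu=w^{h}\mu_\infty$ with $\mu_\infty$ a local generator of $\mathcal{F}^{\ell}_e$. Then $g=w^{h-d}\,\hat L'\langle\tilde\nu,\mu_\infty\rangle$, where $\hat L'=w^{-h}\hat L w^{h}$ kills $\mu_\infty$. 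The last factor is regular at $w=0$ by your pairing argument. That argument is legitimate there because $D_w$ is logarithmic, $\nabla_{D_w}\tilde\nu$ is a regular section of $\mathcal{H}^e$ by admissibility, and the pairing $\mathcal{H}^e\times\mathcal{H}_e\to\mathcal{O}$ is regular. Hence $\deg g\le d-h$.

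Your argument for $z\mid g$ is fine once ``$L$ reduces to $D^d$'' is replaced by this observation: the normalized period $1+O(z)$ forces the $z^0$-part of $L$ to kill constants.

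The field-of-definition step must be run on a lift-independent object. Examples are the de Rham class of $\delta\nu$, or the special lift of Proposition~\ref{pr7.2} (from \cite[Thm.~5.5]{GK}, one of the results the paper cites for this statement). You also need the fact that this object determines $g$. ``Galois equivariance of the coefficients of $g$'' has no content until $g$ has such an algebraic description.
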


\section{Hypergeometrics}\label{S3}

Next we review a fairly ubiquitous class of variations which are simple enough that one can compute everything:  periods, quasi-periods of normal functions, motivic Gamma functions, etc.  For us they will be prescribed by a pair of $r$-tuples $\fa=(\fa_1,\ldots,\fa_r)$ and $\fb=(\fb_1,\ldots,\fb_r)$ of rational numbers contained in $(0,1]$, with $\fb_r=1$, $\fa_i\neq \fb_j$ ($\forall i,j$), and
\begin{equation}\label{eq3.1}
\textstyle Q_{\infty}(\lambda):=\prod_j(\lambda-e^{2\pi\ay\fa_j})\;\;\text{and}\;\;Q_0(\lambda):=\prod_j(\lambda-e^{-2\pi\ay\fb_j})\;\in\;\QQ[\lambda].
\end{equation}
Our convention is that the weight $\ell=\ell_{\fa,\fb}$ of a hypergeometric variation (HGV) is equal to its level, so that $h^{\ell,0}\neq 0$.  We summarize contributions from \cite{Ka,BH,CG,Fe,RR,Ab} in
\begin{prop}\label{pr3.2}
\textup{(i)} There exists a rank-$r$ polarizable VHS\footnote{While our convention is to work rationally, this may be defined as a $\ZZ$-PVHS.} $\BH_{\fa,\fb}$ over $\PP^1\setminus\{0,1,\infty\}$, together with a section\footnote{A warning is in order:  we are not assuming here that $\HH$ has MUM at $z=0$, though this $\mu$ does have a period \eqref{eq3.3} limiting to $1$ there, and can be defined over $\QQ$ in the situation of part (iv) of the Proposition.} $\mu\in \Gamma(\PP^1,\F_e^{\ell})$, for which the associated Picard-Fuchs operator is \eqref{eq1.1}.  On the punctured unit disk $\Delta^{*}$ it has a period of the form
\begin{equation}\label{eq3.3}
\langle\VE^{\vee}_0,\mu\rangle=f_{\fa,\fb}(z):=\sum_{n\geq 0}\prod_j\frac{[\fa_j]_n}{[\fb_j]_n}z^n,
\end{equation}
where $\VE^{\vee}_0$ is a section of $\HH_{\fa,\fb}^{\vee}|_{\Delta^*}$.

\textup{(ii)} The Hodge numbers may be determined by the zigzag procedure of \cite[\S5]{RR}.  In particular, $\ell=0$ iff $\fa,\fb$ are totally interlaced;\footnote{This means that, identifying $(0,1]$ with $S^1$ (and counting multiplicity), any two $\fb_j$'s are separated by an $\fa_i$ (and vice versa).} while if all $\fb_j=1$ then $\ell=r-1$ and $\uh=(1,1,\ldots,1)$.

\textup{(iii)} The local system $\HH_{\fa,\fb}$ is absolutely irreducible, with $Q_0$ and $Q_{\infty}$ the characteristic polynomials of $T_0$ resp.~$T_{\infty}$, and $\mathrm{rk}(T_1-\text{id})=1$. It is definable over $\ZZ$. The monodromy group $\Gamma_{\fa,\fb}$ is finite iff $\fa,\fb$ are totally interlaced.  Otherwise its $\QQ$-Zariski closure (and thus also the Mumford-Tate group) is $\mathrm{Sp}_r$ or $\mathrm{SO}(h_{\text{even}},h_{\text{odd}})$ for $\ell$ odd resp.~even.

\textup{(iv)} Writing $q_{\infty}/q_{0}=\prod_{i=1}^a(\lambda^{-\gamma_i}-1)/\prod_{i=a+1}^{a+b}(\lambda^{\gamma_i}-1)$ with $\gamma_1,\ldots,\gamma_a\in\ZZ_{<0}$ and $\gamma_{a+1},\ldots,\gamma_{a+b}\in\ZZ_{>0}$, $\BH_{\fa,\fb}$ is motivic (over $\QQ$) if $a=1$ or $a+b$ is odd (equiv.~$\ell$ is even).
\end{prop}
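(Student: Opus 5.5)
Since Proposition~\ref{pr3.2} collects results from \cite{Ka,BH,CG,Fe,RR,Ab}, the plan is to assemble the four parts from the monodromy side, letting the Hodge-theoretic and motivic statements follow from that data.

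\textbf{Part (iii) first.} Following Beukers--Heckman \cite{BH}, the local system of solutions of $L_{\fa,\fb}$ is determined by the triple $(T_0,T_1,T_\infty)$ with $T_\infty T_1T_0=\mathrm{id}$. Since $L_{\fa,\fb}$ has exponents $1-\fb_j$ at $0$ and $\fa_j$ at $\infty$, the characteristic polynomials of $T_0$ and $T_\infty$ are $Q_0$ and $Q_\infty$. The singularity at $z=1$ has exponents $0,1,\ldots,r-2,\beta$, so $T_1$ is a pseudo-reflection and $\mathrm{rk}(T_1-\mathrm{id})=1$. Levelt's theorem then has two consequences. First, the condition $\fa_i\neq\fb_j$ forces irreducibility. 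Second, it produces companion-matrix generators, so the local system is rigid and definable over $\ZZ$, because $Q_0,Q_\infty\in\ZZ[\lambda]$. The finiteness criterion via interlacing is the Beukers--Heckman theorem. The Zariski closure being $\mathrm{Sp}_r$ or an orthogonal group is their classification of the invariant form, which is symplectic or symmetric according to the parity of $\ell$ as determined in (ii).

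\textbf{Part (i).} By Katz's middle-convolution construction \cite{Ka}, the local system is a sub-quotient of the cohomology of a family of the form
\[
\prod x_i^{\alpha_i}(1-x_i)^{\beta_i}\cdot(\ldots-z)^{\gamma},
\]
obtained by iterated Euler integrals. This geometric origin gives it a polarizable VHS (via Saito, or Simpson, since rigid irreducible local systems of this kind underlie VHS, unique up to shift). The section $\mu$ is the Euler-integral form. Its period on $\Delta^*$ is computed by expanding the integrand as a power series and using Beta integrals, which yields \eqref{eq3.3}. Consequently $L_{\fa,\fb}\mu=0$ in the Gauss--Manin sense.

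\textbf{Parts (ii) and (iv).} For (ii), I would invoke Fedorov's computation \cite{Fe} of the Hodge numbers, which counts the crossings of the sorted $\fa$'s and $\fb$'s; this is the zigzag of \cite{RR}. The two special cases then drop out directly. If the lists are totally interlaced, the zigzag never climbs, so $\ell=0$. If all $\fb_j=1$, each step climbs by one, giving $\uh=(1,\ldots,1)$. For (iv), I would write $q_\infty/q_0$ as a ratio of cyclotomic factors. Following \cite{Ab} and Beukers--Cohen--Mellit, I would then realize $\BH_{\fa,\fb}$ inside the cohomology of the $\QQ$-hypersurface
\[
\textstyle\sum x_i=1,\quad \prod x_i^{\gamma_i}=\lambda z,
\]
or a quotient of it. This realization is valid when $a=1$ or $a+b$ is odd, because only then does the relevant piece avoid extra Tate/degenerate parts. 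In those cases one takes the primitive part, which is cut out by a $\QQ$-rational projector.

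\textbf{Main obstacle.} The hardest step will be (iv): precisely identifying $\BH_{\fa,\fb}$ with a motivic sub-VHS defined over $\QQ$, as opposed to one defined over a cyclotomic field. I would first try the Beukers--Cohen--Mellit toric model and isolate the piece on which the torus-character symmetry acts trivially.
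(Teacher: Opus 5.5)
The paper offers no argument for this proposition; it only attributes the parts to Katz, Beukers--Heckman, Corti--Golyshev, Fedorov, Roberts--Rodriguez-Villegas and [Ab]. Assembling the statement from those sources is therefore the expected route. Two of your steps fail as written, however, and both fail exactly where the statement is more general than its sources.

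\textbf{Part (i).} Identifying $\mu$ with ``the Euler-integral form'' does not put $\mu$ in $\Gamma(\PP^1,\F^{\ell}_e)$. Your iterated Beta integrals converge, with a holomorphic integrand on the cyclic cover, only if $\fb_1,\dots,\fb_{r-1}$ can be matched with $r-1$ of the $\fa_i$ so that each $\fb_j$ exceeds its partner. That holds for $\fb=\bm{1}$, and for $\ell=0$ there is nothing to prove, but in general it fails. Take $\fa=(\frac13,\frac12,\frac23)$, $\fb=(\frac14,\frac34,1)$.
\begin{itemize}
\item This datum is balanced and not interlaced, with $\ell=2$ and $\uh=(1,1,1)$. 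The exponents at $z=1$ are $0,1,-\frac12$, so $T_1$ is a reflection.
\item Write $\mu=\phi_0e_0+\phi_1e_1+\phi_se_s$ in a flat frame of $T_1$-eigenvectors, and let $Q$ be the symmetric polarization. The anti-invariant $e_s$ is $Q$-orthogonal to $e_0,e_1$, so $Q(\mu,\mu)=Q(e_s,e_s)\phi_s^2+(\text{holomorphic at }1)$, with $\phi_s\sim(1-z)^{-1/2}$ and $Q(e_s,e_s)\neq0$.
\item Hence $Q(\mu,\mu)$ has a simple pole at $z=1$, so $\mu$ cannot lie in the isotropic line $\F^2$.
\item By Schur's lemma every section killed by $L_{\fa,\fb}$ is a constant multiple of $\mu$. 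So no section of $\F^2$ has Picard--Fuchs operator \eqref{eq1.1}.
\end{itemize}
This shows up in your own construction: pairing $\frac14$ with any $\fa_i>\frac14$ gives a factor $(1-t)^{\frac14-\fa_i-1}$, which has a pole on the cover. So (i) needs a hypothesis such as $\fb=\bm{1}$ or $\ell=0$, which covers every later use in the paper. Your proof would also have to actually verify holomorphy of the integrand.

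\textbf{Part (iii).} An invariant form only gives \emph{containment} of the Zariski closure in $\mathrm{Sp}_r$ or $\mathrm{O}_r$. Equality is the Beukers--Heckman Zariski-closure theorem, which assumes the monodromy group is primitive, and balanced data can have infinite imprimitive monodromy. For $\fa=(\frac14,\frac14,\frac34,\frac34)$, $\fb=(\frac12,\frac12,1,1)$, the duplication formula shows $f_{\fa,\fb}(w^2)$ is the even part of ${}_2F_1(\frac12,\frac12;1;w)$. The monodromy is therefore induced from the Legendre system and preserves a splitting into two planes, so its Zariski closure is a proper subgroup of $\mathrm{Sp}_4$. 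Primitivity, checkable by their imprimitivity criterion, must be added as a hypothesis.

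\textbf{Part (iv).} Your treatment is a heuristic rather than an argument. The claim that the hypergeometric piece avoids extra Tate parts ``only then'' is asserted, not shown. The real content is a $\QQ$-rational projector cutting $\BH_{\fa,\fb}$ out of the cohomology of a family over $\QQ$, and the proposal does not supply it.
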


\begin{example}\label{ex3.4}
The Legendre elliptic curve family $\pi\colon\mathcal{E}\to \PP^1\setminus\{0,1,\infty\}$ has $\BH^1_{\pi}\cong \BH_{(\frac{1}{2},\frac{1}{2}),(1,1)}$.
\end{example}

The \emph{Frobenius deformation} \cite{BV,GZ,Ke} associated to a HGV is
\begin{equation}\label{eq3.5}
\Phi(s,z):=\sum_{n\geq 0} \prod_{j=1}^r \frac{[s+\fa_j]_n}{[s+\fb_j]_n}z^{s+n},
\end{equation}
a series\footnote{\emph{a priori} formal, but defining a holomorphic function on $\widetilde{\UAN}\times (\CC\setminus\cup_j\{\ZZ_{\leq 0}-\fb_j\})$.} satisfying the two equations
\begin{equation}\label{eq3.6}
\textstyle L\Phi=z^s\prod_{j}(s+\fb_j-1)\;\;\;\text{and}\;\;\;T_0\Phi=e^{2\pi\ay s}\Phi.
\end{equation}
According to \cite[\S3]{BV}, if we re-index the $\{\fb_j\}_{j=1}^r$ as $\{b_i\}_{i=1}^{\mathfrak{r}}$ with multiplicities $m_i$, then 
\begin{equation}\label{eq3.7}
\textstyle\left\{\frac{\partial^c\Phi}{\partial s^c}(1-b_i,z)\;\right|\left.i=1\ldots,\mathfrak{r}\;\;\text{and}\;\;c=0,\ldots,m_i-1\phantom{\frac{\partial^c\Phi}{\partial s^c}}\mspace{-25mu}\right\}
\end{equation}
give a basis of solutions to $L(\cdot)=0$ at $z=0$.

The following basic result makes HGVs very appealing for the study of normal functions:
\begin{prop}\label{pr3.8}
$\ANF_{\GG_m}(\BH_{\fa,\fb}(p))$ has rank one for exactly one value of $p$, and is otherwise zero.
\end{prop}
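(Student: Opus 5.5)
We compute $\ANF_{\GG_m}(\BH_{\fa,\fb}(p))$ through the Lefschetz correspondence \eqref{eq2.6}, taking $\U=\PP^1\setminus\{0,1,\infty\}$ and $\U_0=\GG_m$, so that $\Sigma_0=\{0,\infty\}$. Normal functions singular only at $0,\infty$ correspond to $\Hg(\IH^1(\GG_m,\BH(p)))$. The approach is to show that $\IH^1(\GG_m,\BH)$ is a \emph{one-dimensional} MHS, necessarily of Hodge--Tate type $(q,q)$ for a single $q$. Then its Tate twist by $p$ has a nonzero Hodge class precisely when $p=q$, giving rank one, and is zero otherwise.

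\textbf{Step 1 (dimension count).} By the Euler characteristic formula for the middle extension $\jmath_*\HH$ on $\GG_m$, we have $\chi(\GG_m,\jmath_*\HH)=\chi(\U,\HH)+\dim(\HH^{T_1})$. Here $\chi(\U,\HH)=-r$, since $\chi(\U)=-1$ and the rank is $r$. By Proposition \ref{pr3.2}(iii), $\mathrm{rk}(T_1-\mathrm{id})=1$, so $\dim\HH^{T_1}=r-1$, and the total is $-1$. Irreducibility and nonconstancy (Proposition \ref{pr3.2}(iii)) give $H^0=0$. Since $\GG_m$ is an affine curve, $H^2$ of the middle extension vanishes: its dual is $H^0_c=0$. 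Hence $\dim H^1(\GG_m,\jmath_*\HH)=1$.

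\textbf{Step 2 (weights).} We place $\IH^1(\GG_m,\BH)$ in the analogue of \eqref{eq2.8} relative to $\U_0$:
\[0\to\IH^1(\PP^1,\BH)\to \IH^1(\GG_m,\BH)\to \oplus_{\sigma\in\{0,\infty\}}(H_{\lim,\sigma})_{T_\sigma}(-1)\to 0.\]
First, $\IH^1(\PP^1,\BH)=0$. Indeed, its dimension is $-\chi(\PP^1,\jmath_*\HH)$, and this uses the additional invariants at $0$ and $\infty$, which makes it $\le 0$. Concretely, Poincaré duality forces $\dim\IH^1(\PP^1)$ to be even, while $\IH^1(\GG_m)$ surjects onto the local terms with kernel $\IH^1(\PP^1)$; so the kernel has dimension $0$ or $1$, and evenness forces $0$. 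Alternatively, the generic vanishing of $\IH^1(\PP^1,\jmath_*\HH)$ for hypergeometric sheaves (Katz) can be invoked.

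It follows that $\IH^1(\GG_m,\BH)$ is the single one-dimensional piece of $\oplus_\sigma(H_{\lim,\sigma})_{T_\sigma}(-1)$. The coinvariants of a unipotent-part LMHS are mixed of Tate type, since they are a cokernel of $N$ within a limit MHS. Being rank one over $\QQ$, this piece must be of type $(q,q)$; for example, $q=\ell+1$ minus the relevant weight-graded index.

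\textbf{Step 3.} We conclude that $\Hg(\IH^1(\GG_m,\BH(p)))\cong\QQ$ if $p=q$ and is $0$ otherwise, which is the claim.

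The main obstacle is Step 2. We must rule out a contribution from the pure part and identify the surviving one-dimensional piece as Hodge--Tate rather than of some other Hodge type. The first point we handle by the parity/duality argument above. The second is automatic, since a rank-one $\QQ$-MHS that is polarizable on its graded pieces is necessarily of type $(q,q)$.
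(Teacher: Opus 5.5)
This is correct and is essentially the paper's argument. Euler--Poincar\'e with $\mathrm{rk}(T_1-\mathrm{id})=1$ gives $\dim\IH^1(\GG_m,\BH)=1$; a rank-one $\QQ$-MHS is forced to be a Tate object $\BQ(-q)$; and the Lefschetz correspondence then gives Hodge classes for exactly one twist. Your detour through $\IH^1(\PP^1,\BH)=0$ is not needed, since a rank-one pure piece would be Tate anyway, and two of its side claims are false in general: the evenness argument fails for $\ell$ odd, where $\IH^1(\PP^1,\BH)$ has even weight $\ell+1$ and a symmetric pairing, and cokernels of $N$ on a limit MHS need not be Tate. Your Euler-characteristic argument for that vanishing, using $\dim\HH^{T_0}\geq 1$ from $\fb_r=1$, is nonetheless valid.
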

\begin{proof}
Since $\mathrm{rk}(T_1-\text{id})=1$, Euler-Poincar\'e \cite[(5.6)]{GK} implies that $\IH^1(\GG_m,\BH_{\fa,\fb})$ also has rank $1$.  Thus it must be a Tate object $\BQ(-p)$ for some $p>0$ depending on the hypergeometric indices $\fa,\fb$.
\end{proof}

\begin{rem}\label{re3.9}
If we relax the condition \eqref{eq3.1} to $q_0,q_{\infty}\in K[\lambda]$ for some abelian extension $K/\QQ$, then $\BH_{\fa,\fb}$ is a $K$-VHS, and its Weil restriction $\mathrm{Res}_{K/\QQ}(\BH_{\fa,\fb})$ yields a ($\QQ$-)VHS of rank $[K{:}\QQ].r$. The finite monodromy condition is a bit more involved, see the beginning of \S\ref{S6}.  We refer to $K=\QQ$ as the \emph{balanced} case and $K\neq \QQ$ as the \emph{unbalanced} case.
\end{rem}

\section{Vignette \#1: Identities}\label{S4}

Consider a motivic VHS $\BH$ over $\U\subset\PP^1$ as in \S\ref{S2}.  We assume that the weight $\ell=2q$ is even, the Hodge numbers are all $1$ (so that the rank $r=2q+1$), and $\HH$ has MUM at $z=0$. Then $H_{\lim,0}$ is Hodge-Tate, with $\mathcal{B}:=(H_{\lim,0})_{T_0}(-1)\cong \QQ(-\ell-1)$, and pulling back \eqref{eq2.8} by the inclusion $\mathcal{B}\hookrightarrow\text{\textcircled{B}}$ yields an extension of $\QQ(-\ell-1)$ by $\IH^1(\PP^1,\BH)$.  Assume further that this extension is split.\footnote{This will occur if any of the following is true: $\BH$ is extremal (i.e.~$\mathrm{ih}^1(\PP^1,\BH)=0$); $\BH$ is hypergeometric (see below); or $\BH$ arises from the middle cohomology of a pencil defined by a tempered Laurent polynomial \cite[\S3]{DK1}.}  Then there exists $\nu\in\ANF_{\PP^1\setminus\{0\}}(\BH(\ell+1))$ with $[\nu]$ generating $\mathcal{B}$, and (choosing $\mu$ as at the end of \S\ref{S2}) an associated quasi-period $\mathfrak{V}=\langle \tilde{\nu},\mu\rangle$.

Next let $z_0\in U(\QQ)$ be a \emph{Hodge point}, which means that there exists a nontrivial Hodge class $\xi\in \Hg(\BH(q)|_{z_0})$. As long as the Picard-Fuchs operator $L$ associated to $\mu$ has no apparent singularity at $z_0$, Griffiths transversality forces $D^j\mu|_{z_0}$ to generate $\gr_{\F}^{2q-j}\H|_{z_0}$ for $j\leq 2q$, and
there is a polynomial $P$ of degree $q$ such that\footnote{As in the introduction, we omit the ``$\nabla$'' when applying differential operators to sections of $\H$.} 
\begin{equation}\label{eq4.1}
(P(D)\mu)(z_0)=-(2\pi\ay)^{-\ell}\xi.
\end{equation}

Now suppose \emph{everything} is motivic: writing $X=X_{z_0}$,
\begin{itemize}[leftmargin=0.8cm]
\item [(i)] $\xi=\mathrm{cl}_{X}^{q}(W)$ for $W\in\CH^q(X)$, defined over $K\subset\QB$; and
\item [(ii)]	$\nu=\nu_{\Z}$ for $\Z\in \CH^{\ell+1}(\overline{\X}\setminus X_0,\ell+1)$ (say, defined over $\QQ$).
\end{itemize}
Then $P$ must have coefficients in $K$, because the $(2\pi\ay)^{\ell}D^j\mu|_{z_0}$ are $\QQ$-de Rham and $\xi$ is represented by $(2\pi\ay)^q\delta_W$, which is $K$-de Rham. (Here $\delta_W$ is the current of integration over $W$.)  Writing $a^W\colon W\to \mathrm{Spec}(K)$ for the structure map, let $\alpha$ denote the image of $Z:=a^W_*(\Z_{z_0}|_W)$ under
\begin{equation}\label{eq4.2}
\AJ^{q+1,2q+1}_{\text{Spec(K)}}\colon \CH^{q+1}(K,2q+1)\to \CC/\QQ(q+1).
\end{equation}
Pairing \eqref{eq4.1} with a lift $\tilde{\nu}_{\Z}$  gives a number
\begin{equation}\label{eq4.3}
\tilde{\alpha}:=-(P(D)\mathfrak{V})(z_0)=\left<\tilde{\nu}_{\Z}(z_0),\frac{\delta_W}{(2\pi\ay)^q}\right>\in \CC
\end{equation}
projecting to $\alpha$. In the event that $K=\QQ$, then by Borel's theorem $\tilde{\alpha}$ must be a rational multiple of $\zeta(q+1)$.\footnote{\emph{A priori}, for $q+1$ odd, it could also be in $\QQ(q+1)$.  However, in practice (when $K=\QQ$) LHS\eqref{eq4.3} will be a series with $\QQ$-coefficients, and this is not possible.}

\begin{rem}
There are numerous examples where (ii) is known: for instance, if $\X$ arises from compactifying level sets of a tempered Laurent polynomial on a torus $\GG_m^{\ell+1}$, then $\Z$ is essentially given by the coordinate symbol $\{x_1,\ldots,x_{\ell+1}\}$, cf.~\cite[\S6]{GK}.  (The hypergeometric examples below typically arise in this fashion, and this is often easy to prove using Hadamard products as in [op.~cit.].)  In that context, if (i) is unknown, \emph{an identity of the form $(P(D)\mathfrak{V})(z_0)\in \zeta(q+1).\QQ$ provides evidence toward it} (with $K=\QQ$), \emph{and thus toward the Hodge Conjecture for $X_{z_0}$}.
\end{rem}

Specializing to the hypergeometric setting, we take $\BH=\BH_{\fa,\fb}$ with $r=\ell+1$ and all $\fb_j=1$, and $L,\mu$ as in \S\ref{S3}.  Since the weight is even, $\BH$ is motivic by Prop.~\ref{pr3.2}(iv).  We have 
\begin{equation}
\IH^1(\GG_m,\BH)\cong (H_{\lim,0})_{T_0}(-1)\cong\QQ(-\ell-1)
\end{equation}
and $p=\ell+1$ in Prop.~\ref{pr3.8}.  Let $\nu$ be a generator of the rank one group $\ANF_{\PP^1\setminus\{0\}}(\BH(\ell+1))$, which we assume is motivic (see the last Remark).  Since $\deg(L)=1=\deg(\F^{\ell}_e)$, by Prop.~\ref{pr2.10}
\begin{equation}\label{eq4.6}
L\mathfrak{V}=1	
\end{equation}
after Galois descent and rescaling.

Now since $\fa\subset(0,1)$, there exists a unique single-valued determination of $\mathfrak{V}$ about $z=\infty$.  It exists because $\nu$ is nonsingular at $\infty$, cf.~\cite[\S4.2]{GKS}. It is unique because ambiguities of $\mathfrak{V}$ lie in the periods of $\mu$, none of which are single-valued there.  Writing $w:=\frac{1}{z}$,
\begin{equation}\label{eq4.7}
\textstyle \hat{L}:=L\frac{1}{z}=\prod_{j=1}^r (D_w+\fa_j)-w(D_w+1)^r
\end{equation}
is the PF operator associated to $z\mu$, and sends $z\mathfrak{V}\mapsto 1$.  Notice that $(\fa,\fb)=(\fa,\bm{1})$ have been replaced by $(\hat{\fa},\hat{\fb})=(\bm{1},\fa+1)$, so the corresponding Frobenius deformation is 
\begin{equation}
\hat{\Phi}(s,w):=\sum_{n\geq 0}\frac{[s+1]_n^r}{\prod_j[s+\fa_j+1]_n}w^{s+n}.
\end{equation}
By \eqref{eq3.6} (or direct calculation), we have
\begin{equation}
\hat{L}\hat{\Phi}(s,w)=w^s\prod_{j=1}^r(s+\fa_j),
\end{equation}
and setting $s=0$ gives that
\begin{equation}\label{eq4.10}
\frac{1}{\prod_j\fa_j}\hat{\Phi}(0,w)=\frac{1}{w}\sum_{n\geq 1} \frac{(n!)^rw^n}{n^r \prod_j [\fa_j]_n}
\end{equation}
is a single-valued solution to $\hat{L}(\cdot)=1$ about $z=\infty$.  But $\hat{L}(\cdot)=0$ has no nontrivial single-valued solutions there, so $\hat{L}(z\mathfrak{V}-\eqref{eq4.10})=0$ implies that $z\mathfrak{V}=\eqref{eq4.10}$.  Putting this together with the previous argument yields

\begin{thm}\label{th4.11}
\textup{(i)} On $|z|>1$, the unique single-valued lift $\tilde{\nu}$ yields 
\begin{equation}\label{eq4.12}
\mathfrak{V}(z):=\langle\tilde{\nu}(z),\mu(z)\rangle=\sum_{n\geq 1}\frac{(n!)^rz^{-n}}{n^r\prod_j[\fa_j]_n}.
\end{equation}

\textup{(ii)} Let $z_0\in \QQ$ be a Hodge point for $\BH_{\fa,\fb}$ with $|z_0|>1$, at which the Hodge conjecture holds and the cycle $W$ is defined over $\QQ$. Then there is a polynomial $P\in \QQ[T]$ of degree $q$ and $\kmath\in\QQ^*$ such that
\begin{equation}\label{eq4.13}
\sum_{n\geq 1}\frac{P(n)(n!)^rz_0^{-n}}{n^r\prod_{j=1}^r[\fa_j]_n}=\kmath\cdot\zeta(q+1).
\end{equation}
\end{thm}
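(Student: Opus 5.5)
Part (i) is essentially assembled in the discussion preceding the statement, so the plan is to organize that argument. First, by Prop.~\ref{pr3.8} and the identification $\IH^1(\GG_m,\BH)\cong\QQ(-\ell-1)$, the group $\ANF_{\PP^1\setminus\{0\}}(\BH(\ell+1))$ has rank one; take a (motivic) generator $\nu$. By Prop.~\ref{pr2.10}, $L\mathfrak{V}=g$ with $\deg g\le d-h=0$, so $g$ is a constant. The constant is nonzero, since otherwise $\mathfrak{V}$ would be a period and $[\nu]$ would vanish. Rescaling $\nu$ (and using Galois descent to keep it defined over $\QQ$) gives \eqref{eq4.6}.

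Next, since $\nu$ is nonsingular at $\infty$ and $\fa\subset(0,1)$, there is a single-valued determination of $\mathfrak{V}$ on $|z|>1$. It is unique because the ambiguities are periods of $\mu$, i.e.\ solutions of $\hat L(\cdot)=0$ near $w=0$. These behave like $w^{\fa_j}\cdot(\text{power series}+\log\text{ terms})$ with $\fa_j\notin\ZZ$, so none is single-valued. Conjugating, $\hat L=L\tfrac1z$ sends $z\mathfrak{V}\mapsto 1$. On the other side, the Frobenius deformation $\hat\Phi$ satisfies $\hat L\hat\Phi(s,w)=w^s\prod_j(s+\fa_j)$, checked directly from the recursion of coefficients. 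At $s=0$ this gives the single-valued solution \eqref{eq4.10} of $\hat L(\cdot)=1$. The difference of the two solutions is a single-valued solution of the homogeneous equation, hence zero. Multiplying by $1/z=w$ gives \eqref{eq4.12}; the series converges on $|z|>1$ by the ratio test, since the coefficient ratio tends to $1$.

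For (ii), let $\xi=\mathrm{cl}(W)$ be the Hodge class at $z_0$ with $W$ defined over $\QQ$. As explained before the theorem, Griffiths transversality (there is no apparent singularity, as $L$ is hypergeometric with singularities only at $0,1,\infty$) gives $P$ of degree $q$ with \eqref{eq4.1}. The coefficients of $P$ lie in $\QQ$ because both sides are $\QQ$-de Rham after the appropriate $(2\pi\ay)$-normalization. Pairing with $\tilde\nu$ gives \eqref{eq4.3}: $-(P(D)\mathfrak{V})(z_0)$ is a lift of $\AJ$ of $Z\in\CH^{q+1}(\QQ,2q+1)$. By Borel, the image of the regulator is $\QQ\cdot\zeta(q+1)$ modulo $\QQ(q+1)$. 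Since the left side is a convergent series with rational coefficients evaluated at $z_0\in\QQ$, it is real. The ambiguity $\QQ(q+1)=(2\pi\ay)^{q+1}\QQ$ is either purely imaginary or, for $q+1$ even, a rational multiple of $\zeta(q+1)$. In both cases $\tilde\alpha\in\QQ\zeta(q+1)$.

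Finally, apply $D$ termwise to \eqref{eq4.12}: $D z^{-n}=-nz^{-n}$, so $P(D)\mathfrak{V}=\sum P(-n)(\cdots)$. Replacing $P(T)$ by $-P(-T)$ gives \eqref{eq4.13}. It remains to see $\kmath\neq0$. This follows since $\tilde\alpha=\langle\tilde\nu,\delta_W\rangle$ should be nonzero for nontrivial $\xi$. I expect this to be the main obstacle: it amounts to showing that the specialized regulator class of $Z$ is nontrivial. If it cannot be proven in general, I would either allow $\kmath\in\QQ$ or verify nonvanishing in examples numerically. The remaining steps are bookkeeping; the one subtle earlier point is the rationality/real-part argument that excludes the $\QQ(q+1)$ ambiguity.
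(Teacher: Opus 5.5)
Your proposal is correct and follows the paper's own argument. For (i) you use uniqueness of the single-valued determination at $\infty$ (there are no $T_\infty$-invariants since $\fa_j\notin\ZZ$; the local exponents of $\hat L$ at $w=0$ are actually $-\fa_j$, which changes nothing) matched against $\hat\Phi(0,w)/\prod_j\fa_j$; for (ii) you use \eqref{eq4.1}--\eqref{eq4.3}, Borel's theorem, and reality of the rational series to remove the $\QQ(q+1)$ ambiguity, and your termwise step $P(T)\mapsto -P(-T)$ just makes explicit what the paper leaves implicit. The nonvanishing of $\kmath$ that you flag is not established by the paper's argument either, which only yields $\tilde{\alpha}\in\QQ\cdot\zeta(q+1)$, so you are not missing an ingredient the paper supplies there.
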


\begin{example}\label{ex4.14}
Elmi and Golyshev \cite{EG} have carried out a computer search for atypical Hodge points in HGVs.  Among the very few examples with $\fb=\bm{1}$ are
\begin{itemize}[leftmargin=0.5cm]
\item $\fa=(\tfrac{1}{2}, \tfrac{1}{2}, \tfrac{1}{2}, \tfrac{1}{2}, \tfrac{1}{2})$ and $z_0=-4$
\item $\fa=(\tfrac{1}{5},\tfrac{2}{5}, \tfrac{1}{2}, \tfrac{1}{2}, \tfrac{1}{2}, \tfrac{1}{2}, \tfrac{1}{2},\tfrac{3}{5},\tfrac{4}{5})$ and $z_0=-\frac{3125}{1024}$
\end{itemize}
which correspond exactly to \eqref{eq1.2} and \eqref{eq1.3}, proved in \cite{Gu} and \cite{Au} via analytic techniques.
\end{example}

\begin{rem}\label{re4.15}
There is a related perspective on a conjecture of Gross, Kohnen, and Zagier \cite{GKZ}.  Over a modular curve $Y=Y(\Gamma)$ we have a family of Picard-rank 19 $K3$ surfaces with $\BH=\BH^2_{\text{tr}}$ ($\uh=(1,1,1)$). If $S_4(\Gamma)=\{0\}$ then every CM point $\tau\in Y(\QB)$ yields a normal function $\nu_{\tau}$ of $K_1$ type. Pairing with a section of $\H_{\RR}^{1,1}$ gives $\mathfrak{G}_{\tau}$, a higher Green's function. 

Pick any other CM point $\tau'$.  By Lefschetz (1,1), the extra Hodge class on $X_{\tau'}$ is algebraic. If BHC holds, we have $\mathfrak{G}_{\tau}(\tau')\in \QB\langle\log\QB\rangle$, which is (a coarsening of) the GKZ conjecture.  That this is now a theorem of Bruinier-Li-Yang \cite{BLY} this ``gives good evidence for BHC.''  See \cite{DK2} for references and details of this argument, and direct proofs of the relevant cases of the BHC for several families of $K3$ surfaces.
\end{rem}

\section{Lefschetz correspondence revisited}\label{S5}

In this brief intermezzo we want to amplify the material in \S\ref{S2} in two ways. First, we reinterpret the relationship between Hodge classes and normal functions using middle convolution with a mixed Hodge module incarnation of Katz's ``Ur-object'' \cite{Ka}. Second, we explain how to layer integrality onto normal functions and their invariants when the VHS has an underlying $\ZZ$-local system.

We continue to assume that $\HH$ is irreducible and nonconstant; in particular, $\HH$ has trivial fixed part ($\Gamma(\U,\HH)=\{0\}$). Fix a smooth completion $\XB\supset\X$ on which $\pi$ extends to $\pb\colon \XB\to \PP^1$, and set $\XB_{\U_0}:=\pb^{-1}(\U_0)$ for any $\U_0\supseteq \U$. Writing $\kappa\colon \U\hookrightarrow\GG_m$, $\kappa'\colon\GG_m\setminus\{1\}\hookrightarrow\GG_m$, and $\imath\colon \{1\}\hookrightarrow\GG_m$ for the inclusions, the MHM $\kappa_{!*}\BH$ has underlying (shifted perverse) sheaf $\kappa_*\HH$, and $\UR:=R\kappa'_!\BQ$ sits in an exact sequence $0\to R\imath_*\BQ\to\UR\to \BQ[1]\to 0$. Multiplicative convolution on $\GG_m$ then yields the sequence in $\mathrm{AVMHS}(\U)$
\begin{equation}\label{eq5.1}
0\to \BH\to \BH*\UR	\to \mathbf{IH}^1(\GG_m,\BH)\to 0
\end{equation}
in which the right-hand term is constant.  The fiber of the middle term at $z$ may be understood as a subquotient\footnote{In case $\HH=R^{\ell}\pi_*\QQ_{\X}$, it is just the quotient by the \emph{phantom cohomology} $\oplus_{\sigma\in \Sigma\cap\GG_m}\ker\{H^{\ell+1}(X_{\sigma})\to H_{\lim,\sigma}^{\ell+1}\}$. We can also write in general $(\BH*\UR)_z=\IH^1(\GG_m,\{z\};\BH)$.} MHS of $H^{\ell+1}(\XB_{\GG_m},X_z)$ as in \cite[Prop.~4.15]{GKS}.

Now we can think of any Hodge class $\alpha\in \Hg(\IH^1(\GG_m,\BH(p)))$ as an inclusion $$\alpha\colon \QQ(-p)\hookrightarrow \IH^1(\GG_m,\BH).$$ Pulling back \eqref{eq5.1} gives a normal function $\nu$ as in \eqref{eq2.4}, with $[\nu]=\alpha$. Hence \emph{Ur-convolution} provides an inverse to \eqref{eq2.6} which makes precise the notion of ``fibering out a Hodge class by a normal function''. Note that the rank of $\IH^1(\GG_m,\BH)$ is given by the sum of the ranks of the monodromy coinvariants over $\Sigma\cap \GG_m$, which for a hypergeometric VHS is always $1$ as in Proposition \ref{pr3.8}.

In the event that $\HH$ has an underlying $\ZZ$-local system $\HH_{\ZZ}$, we write $\ANF_{\U}(\BH(p))_{\ZZ}\subset \ANF_{\U}(\BH(p))$ for normal functions arising from integral Hodge classes. Consider the complexes of sheaves
\begin{multline*}
\SC=\{\H\overset{\nabla}{\to}\H\otimes\Omega^1_{\U}\},\;\;\;\SF=\{\F^p\H\overset{\nabla}{\to}\F^{p-1}\H\otimes\Omega^1_{\U}\} \\ \text{and}\;\; \SJ=\mathrm{Cone}\{\HH_{\ZZ}(p)\oplus\SF\to \SC\}.
\end{multline*}
The holomorphic quasi-horizontal sections of the integral Jacobian bundle $\H/(F^p\H+\HH_{\ZZ}(p))$ are then given by $\NF_{\U}(\BH(p))_{\ZZ}:=\HH^0(\SJ)$, with the connecting homomorphism for $\HH^0$ producing 
\begin{equation}\label{eq5.2}
\xymatrix{\NF_{\U}(\BH(p))_{\ZZ} \ar [rd]_{0} \ar [r]^{(\delta,[\cdot])\mspace{70mu}} & \HH^1(\SF)\oplus H^1(\U,\HH_{\ZZ}(p)) \ar [d] \\ & \HH^1(\SC)\cong H^1(\U,\HH_{\CC}),}
\end{equation}
where the \emph{infinitesimal invariant}\footnote{Typically one defines $\delta\nu$ by using the edge homomorphism to pass to $\Gamma(\U,\H^1_{\nabla}(\SF))$. For our application this distinction is unnecessary.} $\delta\nu$ is computed by $\nabla\tilde{\nu}$. That the diagonal is zero is due to composing two maps in a long-exact sequence; it expresses the fact that $[\nu]$ is a Hodge class.

Admissible normal functions admit refined infinitesimal and topological invariants. Write ${\jmath}\colon \U\hookrightarrow\U_0$, $\bar{\jmath}\colon \U\hookrightarrow\PP^1$, and $\bar{\jmath}^0\colon \U_0\hookrightarrow\PP^1$; denote by $t_{\sigma}$ a local coordinaate at $\sigma\in\Sigma$ and put $D_{\sigma}=\frac{t_{\sigma}}{2\pi\ay}\frac{d}{dt_{\sigma}}$. The upper\footnote{The lower canonical extension $(\H_{e},\nabla_e)$, to be briefly used in \S\ref{S6}, replaces $[0,1)$ by $(-1,0]$.} canonical extension $(\H^e,\nabla^e)$ of $(\H,\nabla)$ to $\PP^1$ is obtained by requiring eigenvalues of $\nabla_{D_{\sigma}}$ to lie in $[0,1)$ for each $\sigma$. A similar argument with logarithmic complexes (denoting log poles by $\langle\,\cdot\,\rangle$)
\begin{multline*}
\SC^e:=\{\H^e\overset{\nabla^e}{\to}\H^e\otimes \Omega^1_{\PP^1}\langle\Sigma\rangle\},\;\;\SF^e:=\{\F^p\H^e\to \F^{p-1}\H^e\otimes \Omega^1_{\PP^1}\langle\Sigma\rangle\},\\ \text{and}\;\;\SJ^e:=\text{Cone}\{R\bar{\jmath}_*\HH_{\ZZ}(p)\oplus\SF^e\to \SC^e\}
\end{multline*}
yields the refined $(\delta,[\cdot])$. The main point is that, by admissibility, $\nabla\tilde{\nu}$ factors through $\H^1_{\nabla}(\SF^e)$ (see for example \cite[\S2.11]{KP}).

For normal functions nonsingular on $\Sigma\cap \U_0$, this is further refined by replacing $R\bar{\jmath}_*\HH_{\ZZ}(p)$ by $(R\bar{\jmath}^0_*)\jmath_*\HH_{\ZZ}(p)$ and $\SC^e$ by the subcomplex $\SC^e_{\U_0}$ of sections which are locally $L^2$ on $\Sigma\cap \U_0$ (and similarly for $\SF^e$).  According to \cite{Zu}, the $L^2$ requirement affects only the unipotent part of $\H^e$ locally, on which eigenvalues of $\nabla_{D_{\sigma}}$ lie in $\ZZ_{\geq 0}$. Writing $\V_{\sigma}^e$ for this and $\mathcal{W}_{\b}$ for the weight monodromy filtration centered about $\ell$, we use (for $\SC^e_{\U_0}$) the subcomplex $$\{\mathcal{W}_{\ell}\V_{\sigma}^e+t_{\sigma}\V^e_{\sigma}\}\to \{\mathcal{W}_{\ell-2}\V_{\sigma}^e+t_{\sigma}\V_{\sigma}^e\}\otimes\Omega^1\langle\sigma\rangle.$$ This produces a diagram
\begin{equation}\label{eq5.3}
\xymatrix{\ANF_{\U_0}(\BH(p))_{\ZZ} \ar [rd]_{0} \ar [r]^{(\delta,[\cdot])\mspace{80mu}} & \HH^1(\PP^1,\SF^e_{\U_0})\oplus \IH^1(\U_0,\HH_{\ZZ}(p)) \ar [d] \\ & \IH^1(\U_0,\HH_{\CC}).}
\end{equation}
For $\sigma\in \PP^1\setminus \U_0$, the residue map 
\begin{equation}\label{eq5.4}
\Res_{\sigma}\colon \Hg(\IH^1(\U_0,\HH_{\ZZ}(p)))\to \Hg((H_{\lim,\sigma})_{T_{\sigma}}(p-1))
\end{equation}
computes the singularity class
\begin{equation}\label{eq5.5}
\text{sing}_{\sigma}(\nu)=\Res_{\sigma}([\nu])\underset{\otimes\QQ}{=}\Res_{\sigma}(\delta\nu).
\end{equation}
We note that \eqref{eq5.4} is induced by the more geometric residue map $H^{\ell+1}(\XB_{\U_0})\to H_{\ell}(X_{\sigma})(-\ell-1)$.

\section{Vignette \#2: Algebraicity}\label{S6}

Let $\fa,\fb\subset (0,1]\cap \QQ$ with $\fa\cap\fb=\emptyset$, $\fb_r=1$, and write $m$ for the lcm of the denominators. Given any $k\in\ZZ\cap[0,m]$ coprime to $m$, we consider the pairs $\langle k\fa\rangle,\langle k\fb\rangle$ where $\langle\cdot\rangle$ takes the part in $(0,1]$. Writing $K\subseteq\QQ(\zeta_m)$ for the field generated by the coefficients of $q_0,q_{\infty}$, this produces $s=[K{:}\QQ]$ distinct hypergeometric conjugates $f_1=f,f_2,\ldots,f_s$ of $f:=f_{\fa,\fb}$. If $\langle k\fa\rangle$ and $\langle k\fb\rangle$ interlace on the unit circle for each $k$ as above, then the monodromy group of the (irreducible) hypergeometric local system $\VV=\VV_{\fa,\fb}$ associated to $f$ is finite and defined over $\mathcal{O}_K$ \cite[Thms.~3.5 \& 4.8]{BH}. It underlies an isotrivial $\mathcal{O}_K$-VHS $\BV$ of weight and level zero.

In particular, $f$ itself has finite monodromy. By the Riemann existence theorem, it is algebraic over $\CC(z)$; and since it has rational power-series coefficients at $0$, it is algebraic over $\QQ(z)$. That is, we have an absolutely irreducible polynomial $P\in \QQ[z,y]$ defining an affine curve, whose normalized completion is a minimal \emph{existence domain} for $f$. This is a (smooth, irreducible) curve cover $\XB\overset{\pb}{\to}\PP^1_z$, defined over $\QQ$ and branched over $\{0,1,\infty\}$, together with a function $\tilde{f}\in\QQ(\XB)$ (namely, $y$), which on $\XB^{\text{an}}_{\CC}$ is a well-defined meromorphic continuation of $f$. Since the monodromies of the $f_j$ are Galois conjugate, the existence domains are the same and we have $\tilde{f}_j\in \QQ(\XB)=\QQ(\tilde{f},z)$ as well.

The hypergeometric function $f$ is \emph{factorial} if it can be written in the form $\sum_{n\geq 0}\frac{(a_1n)!\cdots(a_un)!}{(b_1n)!\cdots(b_vn)!}(cz)^n$. It is known that this is equivalent to the balanced case where \eqref{eq3.1} holds and $K=\QQ$ \cite{DR1}. In this case, the following result was conjectured by Golyshev (see \cite{Za}), and its first statement proved by Delaygue and Rivoal \cite[Thm.~2]{DR1} and Bostan \cite[Cor. 2.3]{Bo} using $p$-adic analysis.  We offer instead a Hodge-theoretic proof using normal functions, which gives the full result:\footnote{In \cite{DR1} it is only shown that $F^m\in \QQ(f,z)$ for some $m\in \mathbb{N}$.  Note that some choice of constant of integration and extension of $F$ over $\XB$ is required for the statement to make sense, and the proof implicitly provides these.}

\begin{thm}\label{th6.1}
Let $\fa,\fb$ be balanced and totally interlaced, and $f=f_{\fa,\fb}(z)$. Then $F:=\exp(\int f\frac{dz}{z})$ is algebraic over $\QQ(z)$; more precisely, we have $F\in \QQ(f,z)$.
\end{thm}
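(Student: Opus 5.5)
The plan is to realize $\log F=\int f\,\frac{dz}{z}$ as the quasi-period of a normal function, and then use the finiteness of monodromy together with a Lefschetz $(1,1)$-type argument on the curve $\XB$ to show this normal function is torsion. Here $\BV$ is the weight-zero isotrivial variation underlying $\VV_{\fa,\fb}$.

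\textbf{Step 1: setting up the normal function.} By Proposition~\ref{pr3.8}, $\ANF_{\GG_m}(\BV(p))$ has rank one for a unique $p$. I would first argue that $p=1$. The summand \textcircled{B} at $z=1$ is $(H_{\lim,1})_{T_1}(-1)$. Since $T_1$ is a pseudoreflection of finite order and the weight is zero, this is $\QQ(-1)$, so the extension class at $1$ is of $K_1$-type. Thus $\nu$ is an extension of $\QQ(0)$ by $\BV(1)$.

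Let $\mu$ be the section with period $f$ from Proposition~\ref{pr3.2}(i). Since the level is zero, $\F^0=\H$ and the Jacobian is $\H/(\F^1+\VV(1))=\H/\VV(1)$. A normal function is then just a multivalued flat-modulo-periods section, and $D\tilde\nu$ is a flat section. Pairing, the quasi-period $\fV=\langle\tilde\nu,\mu\rangle$ satisfies $L\fV=g$ with $g$ a polynomial. After normalization, the Frobenius deformation \eqref{eq3.6} (at $s=0$ with $\fb_r=1$, or directly $L(\int f\,dz/z)$) identifies $\fV$ with $\int f\frac{dz}{z}$, up to periods. That is, $D\fV=f$ modulo solutions of $L$.

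More cleanly, I would use the dual picture: $D\tilde\nu$ is a constant multiple of the flat section $\VE_0^\vee$-dual pairing, and $\langle D\tilde\nu,\mu\rangle=f$. So $\log F$ is $2\pi\ay$ times an Abel--Jacobi-type integral, whose ambiguities lie in $\QQ(1)$-periods.

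\textbf{Step 2: pulling back to $\XB$.} Pull everything back to the existence domain $\pb\colon\XB\to\PP^1$. There $\pb^*\VV$ becomes trivial: its monodromy is finite, and $\XB$ is the minimal cover trivializing $f$ and its conjugates $f_j$. So $\pb^*\BV\cong\BQ(0)^{\oplus r}\otimes\mathcal{O}_K$, and $\pb^*\nu$ becomes an element of $\ANF_{\XB^\circ}(\BQ(1))$. This group consists of extensions of $\QQ(0)$ by $\QQ(1)$ over the punctured curve $\XB^\circ=\pb^{-1}(\GG_m\setminus\{1\})$, i.e.\ of invertible functions up to torsion, via $\log$.

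Concretely, the component of $\tilde\nu$ along the trivialized summand gives a multivalued function $G$ on $\XB^\circ$ with $dG=2\pi\ay\cdot\tilde f\,\frac{dz}{z}$ (up to scaling). Admissibility means $dG$ has at worst log poles, and the monodromy of $G$ lies in $2\pi\ay\,\QQ$.

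\textbf{Step 3: the Lefschetz $(1,1)$ step (main obstacle).} The crux is to show that $G=\log h$ for some $h\in\QQ(\XB)^*$, up to a rational multiple. This is where the argument "in the spirit of Lefschetz's $(1,1)$ theorem" enters. The $1$-form $\omega=\tilde f\,\frac{dz}{z}$ is a meromorphic form with log poles on $\XB$.

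The Hodge class $[\nu]$ lives in the rank-one space $\IH^1(\GG_m,\BV(1))$. Pulled back, it lands in $\Hg(H^1(\XB^\circ,\QQ(1)))$, so its image in $H^1(\XB)$ is a Hodge class of weight $-1$, which must vanish. Hence $\omega$ has no holomorphic part and no periods along $H_1(\XB)$ beyond $\QQ(1)$. That is, $\omega$ is a logarithmic form of the third kind whose periods all lie in $2\pi\ay\QQ$, with rational residues.

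By Abel's theorem, some integer multiple $N\omega$ equals $d\log h$ for a rational function $h$ whose divisor is $N\cdot\mathrm{Res}(\omega)$. This uses the vanishing of the Abel--Jacobi image of the residue divisor in $J(\XB)$, which is exactly the statement that the extension splits, and it is what I expect to require the most care. Galois descent, using that $\omega$ is defined over $\QQ$, puts $h$ in $\QQ(\XB)$. This gives $F^N\in\QQ(f,z)$, recovering Delaygue--Rivoal.

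To remove $N$, I would use the integral refinement of \S\ref{S5}. The integral structure $\HH_\ZZ$ and the residue formula \eqref{eq5.5} show that the residues of $\omega$ are integers, since they come from singularity classes in $\Hg((H_{\lim,\sigma})_{T_\sigma})_\ZZ$ at $\sigma\in\pb^{-1}\{0,1,\infty\}$. Likewise the periods lie in $2\pi\ay\ZZ$. Then $\exp\int\omega$ is single-valued and meromorphic on $\XB$, hence $F\in\QQ(\XB)=\QQ(f,z)$ for a suitable constant of integration. The delicate points are the normalization of $\mu$ needed to get integrality, and checking the log-pole/residue behavior at the ramification points over $1$.
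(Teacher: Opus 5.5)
Your overall shape matches the paper's strategy: a rank-one Hodge class forces $\omega:=\tilde f\,\frac{dz}{z}$ to have rational periods, giving $F^N$, and an integral refinement removes $N$. But the two steps that carry the content are asserted rather than proved.

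\textbf{First gap: the normalizing constant.} You never show that $\omega$ itself, rather than some complex multiple of it, represents a rational class. Your $G$ and $\fV$ are matched with $\int f\frac{dz}{z}$ only ``up to scaling'', and the route there is faulty:
\begin{itemize}
\item $D\tilde\nu$ is not flat. Here $F^1\H=0$, so $\tilde\nu$ is unique up to $\HH(1)$, and $D\tilde\nu$ is the infinitesimal invariant, essentially the non-flat section $\tilde f$.
\item $\pb^*\VV$ is not trivial on $\XB$. The existence domain corresponds to the monodromy orbit of $f$, not to the Galois closure. The right mechanism is $\HH_{\ZZ}\subseteq\tilde R^0\pi_*\ZZ_{\X}$, so that sections of $\H$ are functions on $\X$.
\item Your derivation of $p=1$ is misdirected. $[\nu]$ is nonsingular at $z=1$, and $(H_{\lim,1})_{T_1}$ has rank $r-1$, not $1$. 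The value $p=1$ comes from $\Res_0$ landing in $(H_{\lim,0})_{T_0}(-1)\cong\QQ(-1)$.
\end{itemize}
The paper fixes the constant concretely in three moves:
\begin{itemize}
\item The local exponents put $\omega$ in $\Gamma(\PP^1,\H_e'\otimes\Omega^1_{\PP^1}\langle 0\rangle)\cong\IH^1(\U_0,\HH_{\CC})\cong\CC$.
\item Integrality of the monodromy in a basis containing $f$ (with $g_i(0)=0$ for $i>1$) makes $\Res_{X_0}\omega=\VE_0$ an integral divisor, with coefficient $f(0)=1$ on the principal sheet.
\item $\Res_0$ is injective on this rank-one space, so $[\omega]$ is rational.
\end{itemize}
Once this is in place, your ``main obstacle'' (Abel's theorem) is automatic: when all periods lie in $2\pi\ay\QQ$, $\exp(N\int\omega)$ is single-valued.

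\textbf{Second gap: integrality of periods.} This is what decides $F\in\QQ(f,z)$ rather than $F^N$. Your ``likewise the periods lie in $2\pi\ay\ZZ$'' does not follow from integral residues. If $D$ has exact order $N>1$ in $J(\XB)$ and $(h)=ND$, then $\frac{1}{N}\mathrm{dlog}(h)$ has residue divisor $D$ but some period outside $2\pi\ay\ZZ$. Your derivation of integral residues from singularity classes of $\nu$ is also circular, since it presupposes that $\omega$ represents $[\nu]$ integrally.

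What is needed is that $[\omega]$ lies in the image of $\IH^1(\GG_m,\HH_{\ZZ}(1))$. The paper gets this from \eqref{eqn1}:
\begin{itemize}
\item Since $H^2(\U_{\infty},\jmath'_*\HH_{\ZZ}(1))=0$, $\Res_0$ is surjective onto $(H_{\ZZ})_{T_0}=\ZZ\langle\VE_0\rangle\oplus\text{torsion}$, so some integral $\alpha$ has $\Res_0(\alpha)=\VE_0$.
\item Since $\IH^1(\U_{\infty},\HH(1))=0$, $\Res_0$ is injective rationally, which forces $\alpha\otimes\QQ=[\omega]$.
\item Holomorphy of $\omega$ along $X_{\infty}$ then puts all periods on $\XB\setminus X_0$ in $\ZZ(1)$.
\end{itemize}

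Granting that, your Galois descent is fine, and it even avoids the paper's norm-one/Hilbert 90 argument. The function $\exp\int\omega\in\CC(\XB)$ has the $\QQ$-rational divisor $\VE_0$ on the geometrically irreducible curve $\XB$, so it is a constant multiple of an element of $\QQ(\XB)$.
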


\begin{proof}
Let $\tilde{R}^0\pi_*\ZZ_{\X}$ be the local system on $\U=\PP^1\setminus\{0,1,\infty\}$ representing the reduced (trace-free) $H^0$ of fibers. The hypergeometric local system $\HH_{\ZZ}\subseteq \tilde{R}^0\pi_*\ZZ_{\X}$ underlies a weight $0$ isotrivial $\ZZ$-VHS $\BH$.  Up to torsion,\footnote{There is a tricky point here that the generator of the free part of $\IH^1(\PP^1\setminus\{0\},\HH_{\ZZ}(1))$ maps in general to a nontrivial integer multiple of the generator of the free part of $\IH^1(\GG_m,\HH_{\ZZ}(1))$.  See Step 3 below.} we have 
$$\IH^1(\PP^1{\setminus}\{0\},\BH(1))\cong\IH^1(\GG_m,\BH(1))\cong \ZZ(0);$$ 
in particular, the first two Hodge structures have rank one.  This is really \emph{the} key observation: while $\XB$ may have genus $>0$ (see for example \cite{Ko}), and $H^1(\X)$ may contain nontrivial extensions, there is always this Hodge class.  We remark that the rank $r$ of $\HH$ is always less, and typically \emph{much} less,\footnote{According to \cite{RV}, the example in Q2 from \S\ref{S1} (with $r=8$) has $d=483840$.} than the mapping degree of $\pi$, which we will denote by $d$.  We show how to go from this Hodge class to a normal function in three steps.

\textbf{Step 1: de Rham.} Write $\U=\PP^1\setminus\{0,1,\infty\}\overset{\jmath}{\hookrightarrow}\U_0=\PP^1\setminus\{0\}\overset{\bar{\jmath}^0}{\hookrightarrow}\PP^1$. Since the monodromies of $\HH$ are finite at each puncture, the de Rham complex quasi-isomorphic to $(R\bar{\jmath}^0_*)\jmath_*\HH_{\CC}$ is
\begin{equation}
\SC_{\U_0}^e:=\{\H^e\to \H_e'\otimes\Omega^1_{\PP^1}\langle 0\rangle\}.
\end{equation}
Here $\H^e$ is the usual upper canonical extension to $\PP^1$, and $\H_e'$ is the lower canonical extension at $\{1,\infty\}$ and upper canonical extension at $\{0\}$. (This is nothing more than the specialization of the complex at the end of \S\ref{S5} to our present situation.) So we have $\HH^1(\PP^1,\SC^e_{\U_0})\cong \IH^1(\U_0,\HH_{\CC})$, and $\HH^1$ of the subcomplex $\SF^e_{\U_0}:=(\H_e'\otimes \Omega^1_{\PP^1}\langle 0\rangle)[-1]$ computes $F^1\IH^1(\U_0,\HH_{\CC})=\IH^1(\U_0,\HH_{\CC})\cong \CC$. Moreover, a straightforward analysis\footnote{$\XB$ must have an order-$k$ ramification in order for $\tilde{f}$ to have local exponent $-\frac{a}{k}$ (with $a\in (0,k)\cap\ZZ$), and $t_{\sigma}^{-\frac{a}{k}}dt_{\sigma}$ lifts to a holomorphic form at such a point.}  gives that $\Gamma(\PP^1,\H_e'\otimes\Omega^1_{\PP^1}\langle 0\rangle)\hookrightarrow \Omega^1(\XB)\langle X_0\rangle$, where $X_0=\pb^{-1}(0)$ is a finite point-set.

According to \cite{BH}, the indices of the hypergeometric $\mathcal{D}$-module at $0,1,\infty$ are $\{1-\fb_i\}_{i=1}^r$, $\{0,1,2,\ldots,r-1,-\tfrac{1}{2}\}$, and $\{\fa_i\}_{i=1}^r$ respectively.  This gives the local exponents of all analytic continuations of $f$. Moreover, one may interpret $\tilde{f}|_{\X}$ as a (well-defined) section of $\H$ over $\U$, with ``monodromies of the function'' arising from pairing this section with (multivalued) sections of $\HH_{\ZZ}^{\vee}$. Thus we see that over $\PP^1$, $\tilde{f}$ gives a section of the extension $\H_e''$ given by $\H^e$ at $0$, $\H_e$ at $1$, and $\H^e$ at $\infty$ (but with no monodromy eigenvalue-$1$ part at $\infty$). Conclude that $\tilde{f}\frac{dz}{z}$ belongs to $\Gamma(\PP^1,\H_e'\otimes \Omega^1_{\PP^1}\langle0\rangle)$, and is thus an element of $\Omega^1(\XB)\langle X_0\rangle$ giving a multiple of the Hodge class generating $\IH^1(\U_0,\BH(1))$.  At this stage, for all we know, that multiple could be complex.  For the theorem of \cite{DR1}, we need to show it is rational; for the full result, one might try to show it is integral, but this is false.  Instead we will show that it is an \emph{integral generator} of $\IH^1(\GG_m,\BH(1))$.\vspace{2mm}

\textbf{Step 2: Integrality.} As only one solution of the hypergeometric $\mathcal{D}$-module is single-valued about $0$, that solution must be part of any basis with respect to which the monodromy group is actually represented by integral matrices (in the sense of \cite[Thm.~3.5]{BH}). So an arbitrary monodromy beginning from and returning to a neighborhood of $z=0$ carries $f=:g_1$ to $\sum_{i=1}^r a_ig_i$, where $g_i$ has local exponent $1-\fb_i$ and $a_i\in\ZZ$ ($\forall i$). Since $g_i(0)=0$ for $i>1$, we deduce that the restriction of $\tilde{f}$ to $X_0$ defines a divisor $\VE_0$ on $\XB$, with $\ZZ$-coefficients, and defined over $\QQ$. As $f(0)=1$, one of these coefficients is $1$, and the period of $\tilde{f}\frac{dz}{z}$ over a loop $\ell_0\subset\XB\setminus X_0$ encircling this point of $X_0$ is $2\pi\ay$. Thus $\tilde{f}\frac{dz}{z}$ is at least in $\IH^1(\U_0,\HH(1))=\IH^1(\GG_m,\HH(1))$.

Write $\U\overset{\kappa}{\hookrightarrow}\GG_m\overset{\bar{\kappa}^0}{\hookrightarrow}\U_{\infty}:=\PP^1\setminus\{\infty\}$ and $\jmath':=\bar{\kappa}^0\circ\kappa$ for the inclusions. In the next paragraph we show that $\tilde{f}\frac{dz}{z}$ lifts to $\IH^1(\GG_m,\HH_{\ZZ}(1))=H^1(\GG_m,\kappa_*\HH_{\ZZ}(1))$. Because of its residue of $\VE_0$ at $0$, it will then generate the free part of this group. The lift also implies that $\tilde{f}\frac{dz}{z}|_{\XB\setminus (X_0\cup X_{\infty})}$ has all its periods in $\ZZ(1)$. But since $\tilde{f}\frac{dz}{z}$ is smooth along $X_{\infty}$, with trivial residues in $H^0(X_{\infty},\ZZ)$, by localization $\tilde{f}\frac{dz}{z}|_{\XB\setminus X_0}$ has all its periods in $\ZZ(1)$, thereby giving a class in $H^1(\XB\setminus X_{\infty},\ZZ(1))$.

Consider the Leray spectral sequence for $\bar{\kappa}^0_*$ applied to the sheaf $\kappa_*\HH_{\ZZ}(1)$.  Because $H^2(\U_{\infty},\jmath'_*\HH_{\ZZ}(1))=\{0\}$, this becomes an exact sequence of abelian groups
\begin{equation}\label{eqn1}
0\to\IH^1(\U_{\infty},\HH_{\ZZ}(1))\to \IH^1(\GG_m,\HH_{\ZZ}(1))\overset{\Res_0}{\to}(H_{\ZZ})_{T_0}\to 0,
\end{equation}
where $(H_{\ZZ})_{T_0}=\ZZ\langle\VE_0\rangle\oplus\text{torsion}$ (as $\fb_r=1\neq$ other $\fb_j$'s). So there exists a class $\alpha\in \IH^1(\GG_m,\HH_{\ZZ}(1))$ with $\Res_0(\alpha)=\VE_0$. In $\eqref{eqn1}\otimes \QQ$, $\IH^1(\U_{infty},\HH(1))=\{0\}$ by Euler-Poincar\'e and $\tilde{f}\frac{dz}{z}\overset{\Res_0}{\longmapsto}\VE_0$.  It follows at once that $\alpha$ maps to $\tilde{f}\frac{dz}{z}$ in the commutative diagram $\eqref{eqn1}\to\eqref{eqn1}\otimes \QQ$, as claimed.  We remark that $\tilde{f}\frac{dz}{z}$ does not lift further \emph{integrally} to $\U_0$, since in 
\begin{equation}\label{eqn2}
0\to \IH^1(\U_0,\HH_{\ZZ}(1))\to \IH^1(\GG_m,\HH_{\ZZ}(1))\to (H_{\ZZ})_{T_{\infty}}\to 0
\end{equation}
the last term (and the image of $\VE_0$ hence $\alpha$ in it) is torsion but not trivial.\vspace{2mm}

\textbf{Step 3: Normal function.} From the result of Step 2 that $\tilde{f}\frac{dz}{z}$ represents a class in $H^1(\XB\setminus X_0,\ZZ(1))$, it is already clear that $\exp(\int\tilde{f}\frac{dz}{z})$ yields a well-defined function $\tilde{F}\in\CC(\XB)$ with divisor $(\tilde{F})=\VE_0$. But we get a bit more information about $\tilde{F}$ if we construct it as a normal function. Indeed, the Ur-convolution \eqref{eq5.1} yields a \emph{tautological} generator
\begin{equation}
\nu\in\ANF_{\GG_m}(\BH(1))_{\ZZ}/\text{tors.}\cong \ZZ.
\end{equation}
We can think of $\nu$ as a section of the left-hand side of
$$\H/\HH_{\ZZ}(1)\overset{\cong}{\underset{\exp}{\longrightarrow}}\HH_{\ZZ}\otimes \mathcal{O}^*,$$
and define $\tilde{F}:=\exp(\nu)$.
Now $[\nu]$ and $[\tilde{f}\tfrac{dz}{z}]$ represent the same class in $\IH^1(\GG_m,\HH_{\ZZ}(1))$, so by \eqref{eq5.5} $\text{sing}_{\sigma}([\nu])=\Res_{\sigma}([\tilde{f}\tfrac{dz}{z}])=\VE_0$ for $\sigma=0$ and is zero for $\sigma=1$. The situation at $\infty$ is more subtle: the failure of $\tilde{f}\frac{dz}{z}$ to lift in \eqref{eqn2} indicates a torsion class $\text{sing}_{\infty}([\nu])$, which corresponds to $\tilde{F}$ taking root-of-1 values in $X_{\infty}$ (rather than having $\tilde{F}|_{X_{\infty}}\equiv 1$).  In any case, via $\eqref{eq5.3}\otimes \QQ$, $\delta\nu=\nabla\tilde{\nu}=\text{dlog}(\tilde{F})$ factors through $\HH^1(\SF^e_{\U_0})$; and so $\text{dlog}(\tilde{F})$ is an element of $\Omega^1(\XB)\langle X_0\rangle$ representing the same class as $\tilde{f}\frac{dz}{z}$. Thus they are equal as 1-forms.

The key bonus from arguing this way is that since $\tilde{F}$ is a section of $\HH_{\ZZ}\otimes \mathcal{O}^*\subseteq\tilde{R}^0\pi_*\ZZ_{\X}\otimes\mathcal{O}_{\U}^*$, the multiplicative trace must satisfy $\pi_*^{\times}\tilde{F}\equiv 1$. Also, \eqref{eq5.5} yields again $(\tilde{F})=\Res_{X_0}(\text{dlog}(\tilde{F}))=\VE_0$. These two features of $\tilde{F}$ are $\mathrm{Gal}(\CC/\QQ)$-invariant, so by spreading out and specializing we may assume $\tilde{F}\in L(\XB)$ for a number field $L\supseteq\QQ(\zeta_d)$. For any $\sigma\in \mathrm{Gal}(L/Q)$, we have $({}^{\sigma}\tilde{F})={}^{\sigma}(\tilde{F})={}^{\sigma}\VE_0=\VE_0$ (since $\VE_0$ is defined over $\QQ$ with $\ZZ$-coefficients), and so ${}^{\sigma}\tilde{F}/\tilde{F}=:c_{\sigma}\in L$ is a constant. Since $\pi^{\times}_*\,{}^{\sigma}\tilde{F}={}^\sigma\pi_*^{\times}\tilde{F}\equiv 1$, $c_{\sigma}^{d}=1$ and $c_{\sigma}\in L^{\times}$; so $\{c_{\sigma}\}$ defines a $1$-cocycle in the trivial group $H^1(\mathrm{Gal}(L/\QQ),L^{\times})$. Thus it comes from some $\gamma\in L$ with ${}^{\sigma}\gamma/\gamma=c_{\sigma}$ ($\forall\sigma$), and replacing $\tilde{F}$ by $\tilde{F}/\gamma$ makes it $\mathrm{Gal}(L/\QQ)$-invariant hence an element of $\QQ(\XB)=\QQ(\tilde{f},z)$.
\end{proof}

\begin{rem}
One can also argue that the correct determination of $\tilde{F}$ is the one given by integrating from the ramification point(s) of $\pb$ over $z=1$. As an invariant section, the component of $\lim_{z=1}\nu$ in the rank-one nonunipotent part of $\H^e|_{z=1}$ must be 2-torsion. Thus $\tilde{F}=\pm 1$ at the ramification points, which are collectively defined over $\QQ$. So we can pick one and normalize $F$ to $+1$ there, and in that sense we have $F=\exp(\int_1\tilde{f}\frac{dz}{z})$ on a neighborhood of $0$. This is also consistent with $\text{sing}_1([\nu])=0$ in Step 3 above.
\end{rem}

\begin{example}
The simplest example is already interesting:  let $r=1$, $\fa=\frac{1}{2}$, and $\fb=1$.  The hypergeometric ``period function'' is
\begin{equation}
f(z)=\sum_{n\geq 0}\frac{[\frac{1}{2}]_n}{n!}z^n=\frac{1}{\sqrt{1-z}}
\end{equation}
and $\XB$ is defined by $P(z,y)=y^2-zy^2-1$.  The exponential integral is
\begin{equation}
F(z)=\exp\left(\int_1\frac{dz}{z\sqrt{1-z}}\right)=\frac{1-\sqrt{1-z}}{1+\sqrt{1-z}}
\end{equation}
and thus $\tilde{F}=\frac{y-1}{y+1}$. The normal function identifies with
\begin{equation}
\log(F)=\log\left(\frac{z}{4}\right)	+\sum_{n>0}\binom{2n}{n}\frac{z^n}{4^n n}.
\end{equation}
Its specializations to the rational points $z=\frac{4}{N}$ with $N>5$ and $N(N-4)$ squarefree have meaning in the context of Dirichlet's class number formula for $K_N:=\QQ(\sqrt{N(N-4)})$. Indeed, $F(\frac{4}{N})$ is a fundamental unit and $-\log F(\frac{4}{N})$ the Dirichlet regulator, so that 
\begin{equation}
\log F(\tfrac{4}{N})=2\frac{\zeta_{K_N}'(0)}{h_{K_N}}.
\end{equation}
\end{example}

\begin{rem}
One might ask what arithmetic quantities are ``interpolated'' by $\int\frac{dz}{z}$ of more general balanced hypergeometric periods. Already for something as simple as $$r(t)=\log(t)+\sum_{n>0}\binom{2n}{n}^2\frac{t^n}{n}$$ this gets into Bloch-Kato for $K_2$ of (families of) elliptic curves \cite{dJDGK}; while one can test Bloch-Beilinson in families of Calabi-Yau 3-folds using more complicated determinants of hypergeometric expressions \cite{GK}. I don't know if Lefschetz could have foreseen normal functions being used to experimentally verify conjectures in arithmetic geometry, but they are proving to be an indispensable tool.
\end{rem}

Turning to the unbalanced case, we have the following result, also proved by Delaygue--Rivoal \cite[Thm.~2]{DR2}.

\begin{thm}\label{th6.11}
Let $\{f_j\}_{j=1}^s$ be the complete set of hypergeometric series conjugate to $f=f_1$. Then for any $\alpha\in\mathcal{O}_K$, $$F_{\alpha}:=\exp\left(\sum_{j}\sigma_j(\alpha)\int_1f_j\frac{dz}{z}\right)\in\QB(f,z);$$ and $F_1\in\QQ(f,z)$.
\end{thm}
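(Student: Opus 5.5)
We adapt the proof of Theorem \ref{th6.1} to the Weil restriction. As in Remark \ref{re3.9}, $\VV_{\fa,\fb}$ is an $\mathcal{O}_K$-local system of rank $r$ whose Galois conjugates are the local systems $\VV_j$ attached to $f_j$ (for $k$ coprime to $m$). Its Weil restriction $\HH_{\ZZ}:=\mathrm{Res}_{\mathcal{O}_K/\ZZ}\VV$ is a $\ZZ$-local system of rank $sr$, with $\HH_{\CC}\cong\oplus_j\VV_j\otimes_{\sigma_j}\CC$. It underlies an isotrivial weight-$0$ $\ZZ$-VHS, which again sits inside $\tilde R^0\pi_*\ZZ_{\X}$ for the common existence domain $\XB\to\PP^1$.

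\textbf{Step 1: Hodge classes.} Applying Proposition \ref{pr3.8} (Euler--Poincar\'e) to each $\VV_j$ shows that $\IH^1(\GG_m,\BH(1))$ has rank $s$. In fact it is a free rank-one $\mathcal{O}_K$-module up to torsion, and is entirely of type $(0,0)$. Each $f_j$ has leading coefficient $1$ at $0$, and $\fb_r=1$ is the unique index giving the invariant part, so $(\VV_{\mathcal{O}_K})_{T_0}=\mathcal{O}_K\langle\VE_0\rangle\oplus$torsion. The argument of Step 1 of Theorem \ref{th6.1} (local exponents from \cite{BH}, footnote on lifting to $\Omega^1(\XB)\langle X_0\rangle$) then shows that each $\tilde f_j\frac{dz}{z}$ is a logarithmic $1$-form on $\XB$ representing the $\sigma_j$-isotypic component of the generator.

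\textbf{Step 2: Integrality.} For $\alpha\in\mathcal{O}_K$ consider $\omega_\alpha:=\sum_j\sigma_j(\alpha)\tilde f_j\frac{dz}{z}$. Its residue at $X_0$ is the restriction of $\sum_j\sigma_j(\alpha)\tilde f_j$ to $X_0$. An integral monodromy basis starting with the single-valued solution, as in \cite[Thm.~3.5]{BH}, shows that this restriction is a divisor $\VE_0^{(\alpha)}$ whose coefficients are $\mathcal{O}_K$-traces, hence lie in $\ZZ$. Running the Leray sequence \eqref{eqn1} for the $\ZZ$-local system $\HH_{\ZZ}$ gives $\alpha'\in\IH^1(\GG_m,\HH_{\ZZ}(1))$ with $\Res_0\alpha'$ equal to the image of $\alpha\VE_0$. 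Rationally the residue map is injective, since $\IH^1(\U_\infty,\HH(1))=0$. Hence $\omega_\alpha$ has all periods in $\ZZ(1)$ on $\XB\setminus X_0$, and $\tilde F_\alpha:=\exp\int\omega_\alpha\in\CC(\XB)$ with $(\tilde F_\alpha)=\VE_0^{(\alpha)}$.

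\textbf{Step 3: Descent.} The Ur-convolution normal function $\nu_\alpha\in\ANF_{\GG_m}(\BH(1))_{\ZZ}$ identifies $\tilde F_\alpha$ with a section of $\HH_{\ZZ}\otimes\mathcal{O}^*$. This gives $\mathrm{dlog}\tilde F_\alpha=\omega_\alpha$, and hence $\pi^\times_*\tilde F_\alpha\equiv1$. Now spread out over a number field $L\supseteq K(\zeta_d)$. For $\tau\in\mathrm{Gal}(L/\QQ)$, $\tau$ permutes the $f_j$: it sends $\omega_\alpha$ to $\omega_{\tau\alpha}$, and so ${}^\tau\tilde F_\alpha=c_\tau\tilde F_{\tau(\alpha)}$ with $c_\tau^d=1$. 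This means that $\tilde F_\alpha$ is fixed, up to a constant, only by $\mathrm{Gal}(L/K)$. Restricting the cocycle argument of Theorem \ref{th6.1} to $\mathrm{Gal}(L/K)$ and using Hilbert 90 for $L/K$, we can rescale by $\gamma\in L$ to get $\tilde F_\alpha\in K(\XB)\subseteq\QB(f,z)$. The normalization $\int_1$ fixes the constant up to a root of unity, as in the Remark following Theorem \ref{th6.1}, and such a constant is absorbed into $\QB$.

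For $\alpha=1$ the form $\omega_1=\sum_j\tilde f_j\frac{dz}{z}$ is Galois invariant, so $c_\tau$ is a cocycle on all of $\mathrm{Gal}(L/\QQ)$. Hilbert 90 over $\QQ$ then gives $F_1\in\QQ(\XB)=\QQ(f,z)$. The main obstacle is Step 2, namely the integrality: one must show that the $\mathcal{O}_K$-structure of $(\VV)_{T_0}$ is generated by $\VE_0$, so that the trace combinations produce $\ZZ$-coefficient divisors rather than merely $\frac1N\ZZ$ ones. We would handle this by the same single-valued-solution-in-every-integral-basis argument as in Theorem \ref{th6.1}, applied over $\mathcal{O}_K$.
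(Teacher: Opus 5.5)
Your proposal is correct and follows essentially the paper's route: Weil restriction, then the trace combination $\omega_\alpha$, whose residue divisor has $\ZZ$-coefficients and so defines an integral Hodge class via \eqref{eqn1}; then the Ur-convolution normal function $\nu_\alpha$ with trivial multiplicative trace, and Hilbert 90 descent for $\alpha=1$. Your extra descent over $L/K$, which puts $\tilde F_\alpha$ in $K(\XB)$, is a harmless sharpening of the paper's $\QB$.

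One correction: Galois does not permute the $f_j$. Each $\tilde f_j$ lies in $\QQ(\XB)$ and is fixed by Galois; the paper stresses that only the monodromies are conjugate. The identity ${}^\tau\omega_\alpha=\omega_{\tau(\alpha)}$, and with it the invariance of $\omega_1$, holds for a different reason: $\tau$ acts only on the coefficients $\sigma_j(\alpha)$, and $\tau\sigma_j=\sigma_j\tau$ because $K/\QQ$ is abelian.
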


\begin{proof}
We consider the $\ZZ$-VHS $\Res_{\cO_K/\ZZ}\BV=:\BH\subset R^0\pi_*\ZZ_{\X}$, which has the property that $\BH\otimes_{\ZZ}\cO_K=\oplus_{j=1}^s\BV_j$, where $\BV_j$ ranges over the distinct conjugates $\BV_{\langle k_j\fa\rangle,\langle k_j\fb\rangle}$ and $s=[K\colon\QQ]$. The local exponents of the $j^{\text{th}}$ conjugate $\D$-module are\footnote{Note that $\sum_i(\langle k_j\fb_i\rangle-\langle k_j\fa_i\rangle)-1 > -1$ since $\langle k_j\fb_r\rangle=1$ and the $\langle k_j\fb\rangle,\,\langle k_j\fa\rangle$ are totally interlaced.} $$\textstyle\{1-\langle k_j\fb\rangle\},\;\{0,1,\ldots,r-2,\sum_i(\langle k_j\fb_i\rangle-\langle k_j\fa_i\rangle)-1\},\text{ and }\{\langle k_j\fa_i\rangle\};$$ and the last proof's logic gives that $\tilde{f}_j\frac{dz}{z}$ belongs to $\Gamma(\V_{j,e}'\otimes \Omega^1_{\PP^1}\langle 0\rangle)\hookrightarrow \Omega^1(\XB)\langle X_0\rangle$. Moreover, as $f$ is a local basis element of an $\cO_{\X}$-local system of solutions, we have that $\tilde{f}|_{X_0}$ defines a divisor $\SE_0$ (over $K$) with coefficients in $\cO_K$. Choosing a basis $\{\theta_{\ell}\}$ of $\cO_K$, we may write $\SE_0=:\sum_{\ell}\theta_{\ell}\VE_{0\ell}$ for some $\ZZ$-divisors $\VE_{0\ell}$ (defined over $K$).

Now the \emph{functions} $f_j$ are not Galois conjugate, but the \emph{monodromies} of the local systems they generate are.  Writing $\mathrm{Gal}(K/\QQ)=\{\sigma_j\}$, we have in this sense $\VV_j={}^{\sigma_j}\VV_1$, with invariant integral generator at $0$ sent to invariant integral generator at $0$. Thus we have $\SE_j:=\tilde{f}_j|_{X_0}=\Res_{X_0}(\tilde{f}_j\tfrac{dz}{z})=\sum_{\ell}\sigma_j(\theta_{\ell})\VE_{0\ell}$, which is not a divisor with $\QQ$-coefficients. So it cannot be written in the form $\mathrm{dlog}(\tilde{F}_j)$ with $\tilde{F}_j\in\CC(\XB)$, and $\exp(\int\tilde{f}_k\tfrac{dz}{z})$ is transcendental. On the other hand, if we set $\ff_{\alpha}:=\sum_j\sigma_j(\alpha)f_j$ for any $\alpha\in \cO_K$, $D_{\alpha}:=\sum_j\sigma_j(\alpha)\SE_j=\sum_{\ell}(\sum_j \sigma_j(\alpha\theta_{\ell}))\VE_{0\ell}$ is a $\ZZ$-divisor.

Moreover, we claim that $\ff_{\alpha}\frac{dz}{z}$ is an integral Hodge class. Indeed, by Euler-Poincar\'e $\IH^1(\GG_m,\VV)$ is a rank-one $\cO_K$-module of Hodge type $(-1,-1)$, so that modulo torsion $$\IH^1(\GG_m,\HH_{\ZZ}(1))=\Res_{\cO_K/\ZZ}\IH^1(\GG_m,\VV(1))\cong \cO_K(0)\cong \ZZ(0)^{\oplus s}.$$ Since $\tilde{\ff}_{\alpha}\frac{dz}{z}\in \Gamma(\H_e'\otimes \Omega^1_{\PP^1}\langle 0\rangle)\cong \IH^1(\PP^1{\setminus}\{0\},\HH_{\CC})$ has integer residues, the claim is proved as in Step 2 above.  Pulling \eqref{eq5.1} back by the inclusion of this Hodge class in $\IH^1(\GG_m,\HH(1))$ yields a normal function $\nu_{\alpha}$. As before, $\tilde{F}_{\alpha}=\exp(\nu_{\alpha})\in \CC(\XB)$ has $(\tilde{F}_{\alpha})=D_{\alpha}$, and is defined over $\QQ$.

Finally, notice that $$\textstyle {}^{\sigma}D_{\alpha}=\sum_{\ell}(\sum_j\sigma\sigma_j(\alpha\theta_{\ell}))\,{}^{\sigma}\VE_{0\ell}=\sum_{\ell}(\sum_j\sigma_j(\alpha\theta_{\ell}))\,{}^{\sigma}\VE_{0\ell}$$ need not equal $D_{\alpha}$, since $\mathrm{Gal}(K/\QQ)$ acts on the support of $\VE_{0\ell}$. More precisely, at the points of $\XB$ over $(y,z)=(\tilde{f},z)=(\beta,0)$ (with $\beta\in \cO_K$), the coeffcients of $\SE_j$ is $\sigma_j(\beta)$, where that of $D_{\alpha}$ is $\sum_j\sigma_j(\alpha.\sigma(\beta))$, which for general $\alpha$ is different. However, if $\alpha=1$ we see that $D_{\alpha}$ is invariant, and thus, arguing as before, that $\tilde{F}_1\in\QQ(\XB)$.
\end{proof}

\section{Vignette \#3: Quasi-periods}\label{S7}

The variation of mixed Hodge structure $\BE_{\nu}$ associated to a normal function $\nu$ over $\U\subset \PP^1$ has a dual extension
\begin{equation}\label{eq7.1}
0\to \BQ(p-\ell)\to \BE_{\nu}^{\vee}(-\ell)\to \BH\to 0
\end{equation}
using $\BH^{\vee}(-\ell)\cong\BH$. In effect, the quasi-period $\FV$ pairs a (Hodge) lift of $\mu$ to $F^0\E_{\nu}^{\vee}$ with a ($\QQ$-Betti) lift of $1$ to $\EE_{\nu}(p)$ in \eqref{eq2.4}, and is thus a period of the extension \eqref{eq7.1}. The periods of \eqref{eq2.4} itself are obtained by pairing a lift $\tilde{\nu}$ of the normal function to $\H$ aganst a ($\QQ$-Betti) section of $\HH$.

Differentially, the behavior of $\FV$ is described by Proposition \ref{pr2.10}. We continue with the assumptions imposed there, and assume for convenience that the Hodge numbers of $\BH$ are all $1$, that $\HH$ has MUM at $z=0$ and Picard-Lefschetz monodromy at $\Sigma\cap \GG_m$, and $\deg(L)=|\Sigma\cap \GG_m|$.\footnote{These additional assumptions imply that the Frobenius dual $L^{\dagger}$ is $L$ itself. See \cite[\S7]{Ke}. Note also that we can just take $R=D\frac{1}{g}$, as is done below.} If $R$ is an operator annihilating the polynomial $g$, then $RL$ annihilates $\FV$ and we may identify $\E_{\nu}^{\vee}\cong\D/\D(RL)$ as a $\D$-module. This suggests the Frobenius-dual identification $\E_{\nu}\cong \D/\D(LR^{\dagger})$, and we may wonder if there is some lift $\tilde{\nu}$ annihilated by $LR^{\dagger}$.

To put the question in context, recall that any two (local) lifts $\tilde{\nu}$ and $\tilde{\nu}'$ to $\H$ differ by a section $\gamma$ of the local system $\HH(p)$ and a section $\eta$ of $F^p\H$. Since the latter pairs trivially with $\mu$ by type, the two versions of $\FV$ differ by a period, which is killed by the Picard-Fuchs operator $L$. This makes $g:=L\FV$ independent of the lift. On the other hand, if we apply $D$ to the lifts locally, this kills $\gamma$ but sends $\eta$ to a section of $F^{p-1}\H$. So this section \emph{depends on} the lift, which we must somehow constrain to say anything about $D\tilde{\nu}$. According to \cite[Thm.~5.5]{GK}, the key is to choose $\tilde{\nu}$ so that the corresponding representative of the infinitesimal invariant $\delta\nu$ lies not just in $F^{p-1}\H$ but in the highest Hodge filtrand $F^{\ell}\H$:

\begin{prop}\label{pr7.2}
There exists locally everywhere a choice of lift $\tilde{\nu}$, unique up to $\HH(p)$, such that
\begin{equation}\label{eq7.3}
D\tilde{\nu}=\mathsf{Q}_0(2\pi\ay)^{\ell}g\mu
\end{equation}
where $g:=L\FV$ and $\mathsf{Q}_0\in \QQ^*$.
\end{prop}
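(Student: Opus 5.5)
We work locally on a disk $\Delta\subset\U$ and use the adapted frame $\{\mu, D\mu,\ldots,D^{\ell}\mu\}$ of $\H|_{\Delta}$ (away from apparent singularities). Since the Hodge numbers are all $1$, Griffiths transversality gives $F^{\ell-k}\H=\langle\mu,\ldots,D^k\mu\rangle$. The lift $\tilde{\nu}$ is defined up to $\HH(p)+F^p\H$, and by horizontality of $\nu$ we have $D\tilde{\nu}\in F^{p-1}\H$. The plan is to use the $F^p\H$-ambiguity $\eta$ to kill, step by step, the components of $D\tilde{\nu}$ in $\gr_F^{k}\H$ for $\ell>k\geq p-1$. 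Then we identify the surviving coefficient of $\mu$ using $\FV$ and $g$.

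\textbf{Step 1: normalizing the lift.} Write $D\tilde{\nu}=\sum_{k\le \ell-p+1} c_k D^k\mu$. Replacing $\tilde{\nu}$ by $\tilde{\nu}+\eta$ with $\eta=\sum_{k\le \ell-p} e_k D^k\mu\in F^p\H$ changes $D\tilde{\nu}$ by
\[
D\eta=\sum_k (De_k)D^k\mu+\sum_k e_k D^{k+1}\mu .
\]
We solve triangularly from the top. Choosing $e_{\ell-p}=-c_{\ell-p+1}$ kills the top component. We continue downward with $e_{k-1}=-(c_k+De_k)$ for $k\ge1$. After this, $D\tilde{\nu}=c\,\mu$ for some holomorphic $c$.

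\textbf{Step 2: uniqueness.} If two such lifts differ by $\gamma+\eta$ with $\gamma\in\HH(p)$ and $\eta\in F^p\H$, then $D\eta\in F^{\ell}\H$. Write $\eta=\sum_{k\le\ell-p}e_kD^k\mu$. The top coefficient $e_{\ell-p}$ appears in $D\eta$ at level $D^{\ell-p+1}\mu$, which is not in $F^{\ell}$ unless $\ell-p+1=0$. So downward induction forces all $e_k=0$. Hence the lift is unique up to $\HH(p)$. The edge case $p=\ell+1$ is trivial, since then $F^p\H=0$.

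\textbf{Step 3: identifying $c$.} Pair with a dual frame. The polarization gives $\langle D^j\mu,\mu\rangle=0$ for $j<\ell$, and $\langle D^{\ell}\mu,\mu\rangle=:Y$. This Yukawa-type function is rational, and by the usual argument (as in \cite{GK}) it equals a constant times $(2\pi\ay)^{-\ell}$ divided by the leading coefficient of $L$. Expand
\[
\FV=\langle\tilde{\nu},\mu\rangle,\quad D\FV=\langle D\tilde{\nu},\mu\rangle+\langle\tilde{\nu},D\mu\rangle=\langle\tilde{\nu},D\mu\rangle,
\]
and iterate, using $\langle D^a\tilde{\nu},D^b\mu\rangle$ with $D^a\tilde{\nu}\in F^{\ell-a+1}$ for $a\geq 1$. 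Only the top terms survive, which gives
\[
L\FV=\langle\tilde{\nu},L\mu\rangle+(\text{terms involving } c)=(\text{leading coeff.})\cdot c\cdot Y',
\]
with $Y'=\pm Y$. The factor $Y'$ arises as the pairing $\langle\mu,D^{\ell}\mu\rangle$. Equivalently, one can use Frobenius duality: the adjoint $L^{\dagger}=L$ (under the stated hypotheses) relates $D\tilde{\nu}$ to $L\FV$. Under the hypotheses on $\deg L$ and the singularities, the leading coefficient of $L$ cancels against the denominator of $Y$. This yields $c=\mathsf{Q}_0(2\pi\ay)^{\ell}g$ with $\mathsf{Q}_0$ rational, using the $\QQ$-normalization of $(2\pi\ay)^\ell\mu$.

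The main obstacle is Step 3: the bookkeeping of the Leibniz expansion of $L\FV$, and the precise cancellation of the leading coefficient of $L$ against the Yukawa coupling. I would first try to verify this cancellation using the self-adjointness $L^{\dagger}=L$ recalled in the footnote. Failing that, I would compare $\gr_F$ pieces directly.
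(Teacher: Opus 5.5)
Your proposal is correct and follows the route the paper indicates via \cite[Thm.~5.5]{GK}: spend the $F^p\H$-ambiguity to push the representative $D\tilde{\nu}$ of $\delta\nu$ into $F^{\ell}\H=\langle\mu\rangle$, which also rigidifies the lift up to $\HH(p)$, and then read off the coefficient from your Leibniz expansion, which indeed gives $L\FV=a_{\ell+1}\,c\,\langle\mu,D^{\ell}\mu\rangle$ for $L=\sum_k a_kD^k$ (the binomial sum collapses to $1$). The cancellation you leave open follows from differentiating $\langle D^a\mu,D^b\mu\rangle=0$ for $a+b<\ell$, using the $(-1)^{\ell}$-symmetry and $L\mu=0$, which gives $D\langle\mu,D^{\ell}\mu\rangle=-\tfrac{2}{\ell+1}\tfrac{a_{\ell}}{a_{\ell+1}}\langle\mu,D^{\ell}\mu\rangle$, while $L^{\dagger}=L$ forces $a_{\ell}=\tfrac{\ell+1}{2}Da_{\ell+1}$, so $a_{\ell+1}\langle\mu,D^{\ell}\mu\rangle$ is a nonzero constant (in $(2\pi\ay)^{-\ell}\QQ^*$ by the $\QQ$-de Rham normalization); and $\deg(L)=|\Sigma\cap\GG_m|$ with MUM at $0$ forces $a_{\ell+1}$ to vanish only on $\Sigma\cap\GG_m$, so your frame $\{D^k\mu\}$ has no apparent singularities on $\U$.
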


Suppose $g=\mathsf{k}z^a$ for simplicity, and write $\omega=(2\pi\ay)^{\ell}\mu$ for the $\QQ$-de Rham form. Then $R=D \frac{1}{z^a}$ and $R^{\dagger}=\frac{1}{z^a}D$, so that we have dual mixed homogeneous Picard-Fuchs equations
\begin{equation}\label{eq7.4}
D\frac{1}{z^a}L\FV=0\;\;\;\text{and}\;\;\;L\frac{1}{z^a}D\tilde{\nu}=0
\end{equation}
if $\tilde{\nu}$ is taken as in Prop.~\ref{pr7.2}.

For the remainder of this section we concentrate on a very classical special case, where $\BH$ is the VHS on $H^1$ of a relatively minimal family $\{X_z\}$ of elliptic curves with section ($\ell=1$), and $\nu\in\ANF_{\PP^1}(\BH(1))$ ($p=1$) is the family of Abel-Jacobi maps arising from a divisor $\W$ on the total space $\XB$ whose restrictions $W_z:=\W\cdot X_z$ to smooth fibers has $\deg(W_z)=0$. (Note that $\mathsf{Q}_0^{-1}$ is the number of components of the semistable singular fiber over $z=0$.)

In order to force the existence of a unique normal function of this type we consider families with $\mathrm{ih}^1(\PP^1,\HH)=1$. Our options here are:
\begin{itemize}[leftmargin=0.9cm]
\item [(i)] $5$ semistable singular fibers;
\item [(ii)] $3$ semistable and $1$ non-semistable; or
\item [(iii)] $1$ semistable and $2$ non-semistable.
\end{itemize}
Now we also want to have $\deg(L)=2$ and $h=1$ in Prop.~\ref{pr2.10}, thereby forcing $g=\mathsf{k}z$ for some $\mathsf{k}\in \QQ^*$ ($a=1$ in \eqref{eq7.4}). This is incompatible with (i), whereas (iii) is incompatible with our other assumptions. Using Herfurtner's list \cite[Table 3]{He}, we find (up to quadratic twist) 17 configurations of Kodaira fiber types for (ii). Several of these are closely related to modular families (e.g.~by a twist), but --- since they have rank-one Mordell-Weil group --- cannot be exactly modular.

The 4 hypergeometric $\QQ$-VHS of weight 1 and rank 2 with $\fb=(1,1)$ have $\fa=(\tfrac{1}{2},\tfrac{1}{2})$, $(\tfrac{1}{3},\tfrac{2}{3})$, $(\tfrac{1}{4},\tfrac{3}{4})$, and $(\tfrac{1}{6},\tfrac{5}{6})$, and are realized by elliptic surfaces with singular fiber configurations $\RI_4\RI_1\RI_1^*$, $\RI_3\RI_1\RIV^*$, $\RI_2\RI_1\RIII^*$, and $\RI_1\RI_1\RII^*$ respectively. The latter three yield, by $2{:}1$ basechange and quadratic twist, 5 of the Herfurtner families. For instance, the double-cover of the $(\tfrac{1}{4},\tfrac{3}{4})$ family is a quadratic twist of the elliptic modular surface for $\Gamma_1(4)$, and the normal function it supports is described explicitly in \cite[\S10]{GK}. Here we concentrate on the $(\tfrac{1}{3},\tfrac{2}{3})$ family, and describe the nontorsion section (or normal function) supported on its basechange.

\begin{example}\label{ex7.5}
Let $\vf:=(x-y)(1+x^{-1})(1+y^{-1})$ with Newton polygon $\Delta$, $\BP:=\PP_{\Delta}$ the del Pezzo surface of degree 6, and $\BP\times \PP^1\supset \XB\overset{\pb}{\to}\PP^1_z$ the minimal smooth family obtained by compactifying level sets $\vf=z^{-1}$ to $X_z$. The singular fibers are $\RI_6$ ($z=0$), $2\,\RI_1$ ($z=\tfrac{\pm\ay}{3\sqrt{3}}$), and $\RIV$ ($z=\infty$).  Taking
$$\omega_z:=\Res_{X_z}\left(\frac{\frac{dx}{x}\wedge\frac{dy}{y}}{1-z\vf}\right)\in \Omega^1(X_z),$$ the period over a vanishing cycle $\alpha$ at $z=0$ is\footnote{The period over a complementary generator $\beta$ of $\HH_{\ZZ}^{\vee}$ can be computed by differentiating the \emph{Betti deformation} $\sum_{n\geq 0}\frac{\Gamma(n+s+\frac{1}{3})\Gamma(n+s+\frac{2}{3})}{\Gamma(\frac{1}{3})\Gamma(\frac{2}{3})\Gamma(n+s+1)^2}z^{n+s}$ with respect to $s$, setting $s=0$, and multiplying by $3$. See \cite[Appendix A]{Ke}.}
\begin{equation}\label{eq7.6}
\begin{split}
\Pi(z)&=\frac{1}{2\pi\ay}\int_{\alpha_z}\omega_z=\sum_{n\geq 0}z^n[\vf^n]_{\text{const.}}\\ &=\sum_{m\geq 0}\frac{3m!}{m!^3}(-z^2)^m=\,{}_2F_1\left(\substack{\frac{1}{3},\frac{2}{3}\\ 1}\middle| -27z^2 \right).
\end{split}
\end{equation}
Writing $\fz=-27z^2$, the Picard-Fuchs operator is therefore
\begin{equation}\label{eq7.7}
\begin{split}
L&=D_{\fz}^2-\fz(D_{\fz}+\tfrac{1}{3})(D_{\fz}+\tfrac{2}{3})\\
&=\tfrac{1}{4}\left\{D_z^2+27z^2(D_z+\tfrac{2}{3})(D_z+\tfrac{4}{3})\right\},
\end{split}
\end{equation}
which has degree 2 in $z$. There is an explicit Hauptmodul $\SH$ for $\Gamma_1(3)$ under which $\Pi_0(\fz):=\,{}_2F_1\left(\substack{\frac{1}{3},\frac{2}{3}\\1}\middle|\fz\right)$ pulls back to $A(q):=\Pi_0(\SH(q))\in M_1(\Gamma_1(3))$, see \cite[Ex.~10.10]{DK1}.

Now there are 6 ``constant'' sections, including $(x,y)=(-1,0)$, obtained by intersecting components of $\BP\setminus\GG_m^2$ with $X_z$.  Differences of these sections produce torsion normal functions.  Instead, consider the divisor $\W\in Z^1(\XB)$ with fibers 
\begin{equation}\label{eq7.8}
W_z=\W\cdot X_z=2\left[\left(\tfrac{1+z}{1-z},\tfrac{2z}{1-z}\right)\right]-2\left[\left(-1,0\right)\right],
\end{equation}
and let $\nu=\nu_{\W}\in \ANF_{\PP^1}(\BH(1))_{\ZZ}$. This is given locally by a difference $e_{\ZZ}-e_F$ as in \eqref{eq2.3}, where:
\begin{itemize}[leftmargin=0.5cm]
\item $e_{\ZZ}(z)$ is represented by $2\pi\ay\delta_{\Gamma_z}$, with $\Gamma_z$ a 1-chain bounding on $W_z$ and $\delta_{(\cdot)}$ the current of integration; and
\item	$e_F(z)$ is represented by a differential of the 3rd kind, such that taking $d$ as a current yields $2\pi\ay\delta_{W_z}$.
\end{itemize}
So we see that $\FV=\langle 2\pi\ay\delta_{\Gamma_z}-e_F(z),\tfrac{1}{2\pi\ay}\omega_z\rangle=\int_{\Gamma_z}\omega_z$ is just the classical Abel-Jacobi integral. By Proposition \ref{pr2.10}, we must have
\begin{equation}\label{eq7.9}
L\FV=\mathsf{k}z=\frac{\ay\mathsf{k}}{3\sqrt{3}}{\fz}^{\frac{1}{2}}
\end{equation}
for some $\mathsf{k}$; if $\mathsf{k}\neq 0$, then $\nu$ is nontorsion.  (This idea goes back at least to Manin's seminal paper \cite{Ma} on Mordell's conjecture.)

To compute $\mathsf{k}$, we compute a few terms of the power series for $\FV(z)$ about $z=0$. Let $\Omega_z=\frac{\frac{dx}{x}\wedge\frac{dy}{y}}{1-z\vf}$, $P_z$ be a path on $\CC\setminus\ay\RR_{\leq 0}$ from $-1$ to $\tfrac{1+z}{1-z}$, $C_z:=\Gamma_z\times\{|y|=\epsilon\}$ ($1\gg \epsilon>0$ fixed), and $\Gamma_z:=(P_z\times\{|y|<\epsilon\})\cap X_z$. Then for $|z|\ll \epsilon$, $\FV(z)$ is given by
\begin{equation}\label{eq7.10}
\begin{split}
2&\int_{\Gamma_z}\Res_{X_z}(\Omega_z)=\frac{2}{2\pi\ay}\int_{C_z}\Omega_z=2\int_{-1}^{\frac{1+z}{1-z}}\sum_k[\vf^k]_{y\text{-const}}z^k\frac{dx}{x}
\\	
&=2\int_{-1}^{\frac{1+z}{1-z}}\left(1+(x+x^{-1})z+(x^2-2x-6-2x^{-1}+x^{-2})z^2+\cdots\right)\frac{dx}{x}.
\end{split}
\end{equation}
Writing out terms of the integrand up to $z^5$, and expanding $\frac{1+z}{1-z}$ as a power-series in the result, \eqref{eq7.10} becomes
\begin{equation}\label{eq7.11}
-2\pi\ay+12z+12\pi\ay z^2-140z^3-180\pi\ay z^4+\tfrac{12012}{5}z^5+\cdots.
\end{equation}
We can modify $\Gamma_z$ by adding a copy of $\alpha_z$, which gives finally
\begin{equation}\label{eq7.12}
\FV(z)=12z-140z^3+\tfrac{12012}{5}z^5-\cdots.
\end{equation}
Applying $L$ we find that $\mathsf{k}=3$.

Having determined $\mathsf{k}$, there is a way to get \emph{all} the power-series coefficients of $\FV$. The Frobenius deformation \eqref{eq3.5} for the VHS in the hypergeometric parameter $\fz$ is $\Phi(s,\fz)=\sum_{n\geq 0}\frac{[s+\frac{1}{3}]_n[s+\frac{2}{3}]_n}{[s+1]_n^2}\fz^{s+n}$, and \eqref{eq3.6} gives $L\Phi(\tfrac{1}{2},\fz)=\fz^{\frac{1}{2}}/4$. Since there is a unique solution to \eqref{eq7.9} single-valued in $z$ about $0$, and vanishing at $0$, and \eqref{eq7.12} and $\frac{4\ay}{\sqrt{3}}\Phi(\frac{1}{2},\fz)$ both fit this description, we must have
\begin{equation}\label{eq7.13}
\begin{split}
\FV(z)&=\frac{\ay}{\sqrt{3}}\sum_{n\geq 0}\frac{[\frac{5}{6}]_n[\frac{7}{6}]_n}{[\frac{1}{2}]_{n+1}^2}\fz^{n+\frac{1}{2}}
\\	
&= 12\sum_{n\geq 0}(-1)^n\frac{(6n+1)!\,n!^3}{3n!2n!(2n+1)!^2}z^{2n+1},
\end{split}
\end{equation}
which matches the computed terms of \eqref{eq7.12}.

Finally, the lift $\tilde{\nu}$ about $z=0$ corresponding to this choice of $\FV$ must satisfy \eqref{eq7.3} in the form ($\mathsf{Q}_0=\frac{1}{6}$, $g=3z$)
\begin{equation}\label{eq7.14}
D_z\tilde{v}=\frac{1}{2}z\omega\;\;\iff\;\;\tilde{\nu}=\frac{1}{2}\int_0\omega dz.
\end{equation}
Then $\langle\alpha,\tilde{\nu}\rangle$ and $\langle\beta,\tilde{\nu}\rangle$ yield extension periods for $\BE_{\nu}$, for instance
\begin{equation}\label{eq7.15}
\langle\alpha,\tilde{\nu}\rangle=\frac{2\pi\ay}{2}\int_0\Pi(z)dz=\pi\ay\sum_{m\geq0}\frac{(-1)^m}{2m+1}\frac{3m!}{m!^3}z^{2m+1}.
\end{equation}
Expressions like this are needed for computing regulators of Hadamard product cycles as in \cite[\S10]{GK}, and are relevant to computing Hodge-theoretic height pairings in families.\hfill$\square$
\end{example}

In the remainder of this section we explain how to write down a 1-current on the total space representing the lift $\tilde{\nu}$ in Proposition \ref{pr7.2}. This is a \emph{local} computation, and involves making the $e_F$ part of the lift ``formally explicit''. So we can get by with using the period ratio $\tau\in \mathfrak{H}$ and $q=e^{2\pi\ay\tau}$ as local base-parameters. We write $E_q=\CC^*/q^{\ZZ}\cong\CC/\ZZ\langle 1,\tau\rangle$ for the elliptic fibers, with coordinate $t=e^{2\pi\ay u}$. The divisor $\W$ is locally represented by $\sum_jm_j[t_j]$, with $\sum_jm_j=0$, and $t_j$ regarded as functions of $q$ (valued in $\CC^*$, not just $E_q$). Denote by $\E\overset{\pi}{\to}\mathcal{B}$ the total space over a simply connected subset of the punctured disk $0<|q|<1$. Computations are straightforward and mostly left to the reader.

The form
\begin{equation}\label{eq7.16}
\hat{\omega}(t):=\mathrm{dlog}(1-t)+\sum_{n>0}\mathrm{dlog}\{(1-q^nt)(1-q^nt^{-1})\}
\end{equation}
is not well-defined on $E_q$ since $\hat{\omega}(t/q)-\hat{\omega}(t)=\mathrm{dlog}(-t/q)$. The combination
\begin{equation}\label{eq7.17}
\textstyle\hat{\omega}_{\W}(t):=\sum_j m_j\,\hat{\omega}(t/t_j)
\end{equation}
becomes well-defined on $E_q$ (since $\sum m_j=0$), but on $\E$ it is not since
\begin{align*}
\hat{\omega}_{\W}(t/q)-\hat{\omega}_{\W}(t)&\textstyle=-\sum_j m_j\mathrm{dlog}(t_j)\\&\textstyle=-\left(D_q\sum_j m_j\log(t_j)\right)\frac{dq}{q}=:-f(q)\frac{dq}{q}.
\end{align*}
Replacing $\hat{\omega}_{\W}$ by
\begin{equation}\label{eq7.18}
\tilde{\omega}_{\W}:=\hat{\omega}_{\W}(t)-\frac{\log|t|}{\log|q|}\sum_j m_j\,\mathrm{dlog}(t_j)
\end{equation}
corrects this: $\tilde{\omega}_{\W}$ is a well-defined form in $A^1(\E)\langle \W\rangle$. However, it has the defect that, as a current, 
\begin{equation*}
d[\tilde{\omega}_{\W}]\equiv 2\pi\ay\delta_{\W}-\mathrm{dlog}|t|\wedge\frac{f(q)}{\log|q|}\frac{dq}{q}
\end{equation*}
up to the image of $\pi^*$. We want the second term to be holomorphic in the fiberwise ($t$-)direction.

Consider the lift of $\mathrm{dlog}(t)=2\pi\ay\,du$ (on fibers) to
\begin{equation*}
\widetilde{\mathrm{dlog}(t)}=\mathrm{dlog}(t)-\frac{\log|t|}{\log|q|}\mathrm{dlog}(q)
\end{equation*}
on $\E$. It has $d[\widetilde{\mathrm{dlog}(t)}]=-\frac{\mathrm{dlog}|t|}{\log|q|}\wedge\mathrm{dlog}(q)$, and so 
\begin{equation}\label{eq7.19}
\Omega_{\W}:=\tilde{\omega}_{\W}-f(q)\,\widetilde{\mathrm{dlog}(t)}
\end{equation}
satisfies $d[\Omega_{\W}]=2\pi\ay\delta_{\W}+(D_qf)\mathrm{dlog}(t)\wedge\mathrm{dlog}(q)$.

Taking $\Gamma$ a 3-chain on $\E$ bounding on $\W$, we may interpret 
\begin{equation}\label{eq7.20}
\hat{\nu}:=\hat{\omega}_{\W}-f(q)\,\mathrm{dlog}(t)-2\pi\ay\delta_{\Gamma}
\end{equation}
as a relative ($d_{\text{rel}}$-closed) 1-current on fibers. Let $\tilde{\nu}$ be its class in $\H$. To compute $\delta\nu=\nabla\tilde{\nu}$, we apply $d$ to a lift of $\hat{\nu}$ on $\E$: namely, to $\Omega_{\W}-2\pi\ay\delta_{\Gamma}$. This yields $(D_qf)\,\mathrm{dlog}(t)\wedge\mathrm{dlog}(q)$, and so we conclude that
\begin{equation}\label{eq7.21}
D_q\tilde{\nu}=(D_q f)\,\mathrm{dlog}(t)
\end{equation}
lies in $F^1\H^1$. We have proved

\begin{prop}\label{pr7.22}
The correct choice of lift in Proposition \ref{pr7.2} is given by taking $e_F$ to be the Green's current $\hat{\omega}_{\W}-f(q)\frac{dt}{t}$ on each fiber.
\end{prop}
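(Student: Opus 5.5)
Since the computation in \eqref{eq7.16}--\eqref{eq7.21} already carries most of the weight, I would organize the proof as a verification that the lift $\tilde{\nu}$ defined from \eqref{eq7.20} satisfies the characterizing property of Proposition \ref{pr7.2}. Recall that the property is that $D\tilde{\nu}$ lies in the top Hodge filtrand $F^{\ell}\H=F^1\H^1$, a multiple of $g\mu$. By the uniqueness clause of Proposition \ref{pr7.2}, it suffices to show three things. First, $\hat{\nu}$ is a legitimate local lift of $\nu$, meaning $e_{\ZZ}-e_F$ with $e_F=\hat{\omega}_{\W}-f(q)\frac{dt}{t}$ an admissible $F^1$-representative. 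Second, $D_q\tilde{\nu}\in F^1\H^1$. Third, $F^1\H^1$ is spanned by $\mu$, so that $D_q\tilde{\nu}$ is automatically a function multiple of $\mu$ and hence equals $\mathsf{Q}_0(2\pi\ay)g\mu$.

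The first step is to check that, on each fiber $E_q$, the current $\hat{\omega}_{\W}-f(q)\,\mathrm{dlog}(t)$ is a differential of the third kind. It should be well-defined on $E_q$, with $d$ of it equal to $2\pi\ay\delta_{W_q}$. Well-definedness follows from $\sum_j m_j=0$ together with the quasi-periodicity $\hat{\omega}(t/q)-\hat{\omega}(t)=\mathrm{dlog}(-t/q)$. The residue claim comes from the factor $\mathrm{dlog}(1-t/t_j)$. Subtracting the holomorphic form $f(q)\frac{dt}{t}$ only changes the $F^1$-part, so $e_{\ZZ}-e_F$ still represents $\nu(q)$ in $J$. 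This confirms that $\tilde{\nu}$, the class of $\hat{\nu}$ in $\H$, is a lift of $\nu$.

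The second step is to compute $\nabla\tilde{\nu}$ by the standard recipe: extend the relative current to a current on $\E$ and take $d$. The extension is $\Omega_{\W}-2\pi\ay\delta_{\Gamma}$, where the correction terms $\frac{\log|t|}{\log|q|}(\cdots)$ in \eqref{eq7.18} and in the lift of $\mathrm{dlog}(t)$ make it single-valued on $\E$. The two non-holomorphic error terms $\mathrm{dlog}|t|\wedge\frac{f(q)}{\log|q|}\frac{dq}{q}$ cancel between $\tilde{\omega}_{\W}$ and $f(q)\widetilde{\mathrm{dlog}(t)}$. The term $2\pi\ay\delta_{\W}$ cancels against $d(2\pi\ay\delta_{\Gamma})$, since $\partial\Gamma=\W$. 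What remains is $(D_qf)\,\mathrm{dlog}(t)\wedge\mathrm{dlog}(q)$, modulo $\pi^*$ of forms on the base. Contracting with $D_q$ gives \eqref{eq7.21}, and $\mathrm{dlog}(t)=2\pi\ay\,du$ is holomorphic on fibers, so it lies in $F^1\H^1$.

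The main obstacle is the bookkeeping in this $d$-computation. One has to verify that no residual $\bar{\partial}$-type (antiholomorphic-in-$t$) terms survive, and that the $\log|t|/\log|q|$ corrections contribute nothing to the Gauss--Manin derivative beyond what was cancelled. I would handle this by checking that every correction is of the form $\pi^*(\cdot)$ or $\mathrm{dlog}|t|\wedge\pi^*(\cdot)$, and then confirming that these cancel exactly. The final step is to identify the constant. Pairing \eqref{eq7.21} with $\mu$ and comparing with $D_q$ applied to $\FV=\langle\tilde{\nu},\mu\rangle$ via Proposition \ref{pr2.10} recovers $g$ up to the factor $\mathsf{Q}_0$. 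Uniqueness modulo $\HH(1)$ in Proposition \ref{pr7.2} then identifies this lift with the one stated there.
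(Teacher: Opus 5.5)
Your proposal is essentially the paper's proof. The heart of both is the current computation $d[\Omega_{\W}-2\pi\ay\delta_{\Gamma}]=(D_qf)\,\mathrm{dlog}(t)\wedge\mathrm{dlog}(q)$ (modulo $\pi^*$-type terms), which gives $D_q\tilde{\nu}=(D_qf)\,\mathrm{dlog}(t)\in F^1\H^1$; the identification with the lift of Proposition \ref{pr7.2} then rests, implicitly in both, on the fact that $F^1$-membership of $D\tilde{\nu}$ determines the lift modulo $\HH(1)$ (if $D(h\mu)\in F^1$ then $h=0$, since $D\mu\notin F^1$), so the rest is correct. Your extra ``final step'' is unnecessary, and as phrased it would not work: $\langle D_q\tilde{\nu},\mu\rangle=0$ by isotropy of $F^1\H^1$, so the constant relating $D_q\tilde{\nu}$ to $g\mu$ only becomes visible after pairing with $D\mu$, i.e.\ by applying $L$ to $\FV$.
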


To bring this full circle to \eqref{eq7.3}, if $\omega=F(q)\,\mathrm{dlog}(t)$, and $z=Z(q)$, with $F(q)=\frac{1}{2\pi\ay}\Pi(Z(q))$, then $D_z\tilde{\nu}=\frac{D_zf}{\Pi}\omega$. Thus $D_zf=cg\Pi$ for some constant $c$, and $f=\int cg\Pi\frac{dz}{z}$. This looks similar to \eqref{eq7.15}, and sure enough one computes directly or reasons from \eqref{eq7.21} that $\int_{\alpha}\hat{\nu}=2\pi\ay f$.

Something particularly intriguing happens in the scenario of Example \ref{ex7.5}. Writing as before $\fz=\SH(q)$ as a Hauptmodul and $A(q)=\Pi_0(\SH(q))$, we have for some constant C
\begin{equation*}
D_{\fz}\tilde{\nu}=C\fz^{\frac{1}{2}}\omega\;\;\;\implies\;\;\;D_q\tilde{\nu}=C\frac{D_q\SH}{\sqrt{\SH}}A\frac{dt}{t}.
\end{equation*}
Hence $D_q\langle \tilde{\nu},du\rangle=\frac{1}{2\pi\ay}\langle\tilde{\nu},\alpha\rangle$ and
\begin{equation*}
D_q^2\langle\tilde{\nu},du\rangle=\tfrac{1}{2\pi\ay}D_q\langle\tilde{\nu},\alpha\rangle=\tfrac{1}{2\pi\ay}\langle D_q\tilde{\nu},\alpha\rangle
\end{equation*}
is on one hand $\frac{1}{2\pi\ay}\langle(D_qf)\frac{dt}{t},\alpha\rangle=D_q f$ and on the other $C\frac{D_q \SH}{\SH}A\cdot\sqrt{\SH}$, 
which is a weight-3 modular form for $\Gamma_1(3)$ times the ``quadratic twist'' $\sqrt{\SH}$. This makes $\langle\tilde{\nu},du\rangle$ a \emph{twisted Eichler integral}, and the quasi-period
\begin{equation*}
\FV=\langle\tilde{\nu},\tfrac{1}{2\pi\ay}\omega\rangle=A\langle\tilde{\nu},du\rangle
\end{equation*}
its product with a weight-1 modular form. Though $\FV$ is merely a classical Abel-Jacobi map, this nicely parallels the analysis of higher normal functions and Feynman integrals in \cite{DK1} and \cite{BKV}.

\curraddr{${}$\\
\noun{Department of Mathematics}\\
\noun{Washington University in St. Louis}\\
\noun{St. Louis, MO} \noun{63130, USA}}
\email{${}$\\
\emph{e-mail}: matkerr@wustl.edu}

\end{document}